\numberwithin{equation}{section}
\newtheorem{theoremcounter}{theoremcounter}[section]
\theoremstyle{plain}
\newtheorem{corollary}[theoremcounter]{Corollary}
\newtheorem{lemma}[theoremcounter]{Lemma}
\newtheorem{proposition}[theoremcounter]{Proposition}
\newtheorem{theorem}[theoremcounter]{Theorem}
\newtheorem{introtheorem}{Theorem}
\theoremstyle{definition}
\theoremstyle{remark}
\newtheorem{example}[theoremcounter]{Example}
\newtheorem{notation}[theoremcounter]{Notation}
\newtheorem{remark}[theoremcounter]{Remark}
\newcommandx{\unsure}[2][1=]{\todo[linecolor=red,backgroundcolor=red!25,bordercolor=red,#1]{#2}}
\newcommandx{\change}[2][1=]{\todo[linecolor=blue,backgroundcolor=blue!25,bordercolor=blue,#1]{#2}}
\newcommandx{\info}[2][1=]{\todo[linecolor=OliveGreen,backgroundcolor=OliveGreen!25,bordercolor=OliveGreen,#1]{#2}}
\newcommandx{\improvement}[2][1=]{\todo[linecolor=Plum,backgroundcolor=Plum!25,bordercolor=Plum,#1]{#2}}
\newcommand{\cC}{\ensuremath{\mathcal{C}}}
\newcommand{\cS}{\ensuremath{\mathcal{S}}}
\newcommand{\bZ}{\ensuremath{\mathbb{Z}}}
\newcommand{\rC}{\ensuremath{\mathrm{C}}}
\newcommand{\rE}{\ensuremath{\mathrm{E}}}
\newcommand{\rK}{\ensuremath{\mathrm{K}}}
\newcommand{\rS}{\ensuremath{\mathrm{S}}}
\newcommand{\rT}{\ensuremath{\mathrm{T}}}
\newcommand{\rV}{\ensuremath{\mathrm{V}}}
\newcommand{\rmt}{\ensuremath{\mathrm{t}}}
\newcommand{\vphi}{\ensuremath{\varphi}}
\newcommand{\eqstop}{\ensuremath{\, \text{.}}}
\newcommand{\eqcomma}{\ensuremath{\, \text{,}}}
\newcommand{\NN}{\ensuremath{\mathbb{N}}}
\newcommand{\ZZ}{\ensuremath{\mathbb{Z}}}
\newcommand{\QQ}{\ensuremath{\mathbb{Q}}}
\newcommand{\RR}{\ensuremath{\mathbb{R}}}
\newcommand{\CC}{\ensuremath{\mathbb{C}}}
\newcommand{\Hom}{\ensuremath{\mathop{\mathrm{Hom}}}}
\newcommand{\lra}{\ensuremath{\longrightarrow}}
\newcommand{\hra}{\ensuremath{\hookrightarrow}}
\newcommand{\ot}{\ensuremath{\otimes}}
\newcommand{\Cstar}{\ensuremath{\mathrm{C}^*}}
\newcommand{\Cstarred}{\ensuremath{\Cstar_\mathrm{red}}}
\newcommand{\conto}{\ensuremath{\mathrm{C}_0}}
\newcommand{\ltwo}{\ensuremath{\ell^2}}
\newcommand{\qm}{\mathbf{q}}
\newcommand{\Z}{\mathbb{Z}}
\newcommand{\Star}{\ensuremath{\mathrm{Star}}}
\newcommand{\Link}{\ensuremath{\mathrm{Link}}}
\newcommand{\cone}{\ensuremath{\mathrm{cone}}}
\newcommand{\Ext}{\ensuremath{\mathrm{Ext}}}
\newcommand{\KK}{\ensuremath{\mathrm{KK}}}
\newcommand{\Ell}{\ensuremath{\mathrm{Ell_{unord}}}}
\newcommand{\authors}{Sven Raum and Adam Skalski}
\renewcommand{\title}{Classifying right-angled Hecke C*-algebras via K-theoretic invariants}
\newcommand{\shorttitle}{Classifying Hecke C*-algebras via K-theory}
\begin{document}


\thispagestyle{empty}

\noindent
\begin{minipage}{\linewidth}
  \begin{center}
    \textbf{\LARGE \title} \\
    \authors    
  \end{center}
\end{minipage}

  

\renewcommand{\thefootnote}{}
\footnotetext{
  \textit{MSC classification: Primary 46L80, Secondary 19K99, 20C08, 46L09}
}
\footnotetext{
  \textit{Keywords: right-angled Coxeter group; Hecke algebra; K-theory; UCT; amalgamated free product; classification}
}

\vspace{2em}
\noindent
\begin{minipage}{\linewidth}
  \textbf{Abstract}.
  Exploiting the graph product structure and results concerning amalgamated free products of \Cstar-algebras we provide an explicit computation of the K-theoretic invariants of right-angled Hecke \Cstar-algebras, including concrete algebraic representants of a basis in \rK-theory.  
  On the way, we show that these Hecke algebras are KK-equivalent with their undeformed counterparts and satisfy the UCT.  Our results are applied to study the isomorphism problem for Hecke \Cstar-algebras, highlighting the limits of K-theoretic classification, both for varying Coxeter type as well as for fixed Coxeter type.
\end{minipage}


\section{Introduction}
\label{sec:introduction}

The notion of Coxeter groups originates in the study of reflection groups and Weyl groups.  One of the most important tools when understanding combinatorial and representation theoretic questions associated with these groups are Hecke algebras as studied for example by Kazhdan and Lusztig \cite{kazhdanlusztig79}.  While traditionally focused on affine Coxeter types, by the time, important results on Hecke algebras could also be obtained for arbitrary Coxeter types \cite{eliaswilliamson14}.   Operator algebraic versions of Hecke algebras have been for the first time studied in \cite{davisdymarajanusykiewiczokun07}.  These have recently experienced a sharp rise in attention.  Originally introduced as a mean to study cohomology of buildings \cite[Section 11]{davis08}, Garncarek established them as an interesting class of operator algebras in their own right, by proving a clear parameter dependence of factoriality for single parameter Hecke von Neumann algebras of right-angled type.  Since then, Hecke operator algebras have been studied from various angles, informed by typical questions asked about group operator algebras.  This comes as no surprise, since first of all Hecke operator algebras can be considered deformations of the reduced group \Cstar-algebra and the group von Neumann algebra, and second for certain rational deformation parameters, Hecke algebras arise as corners of group algebras associated with groups acting on buildings, see for example \cite[Section 2]{raumskalski20}.  On the other hand a constant source of inspiration for operator algebraists is the comparison of results known for reduced group \Cstar-algebras of free groups with possible results on Hecke \Cstar-algebras.

In \cite{capsersskalskiwasilewski19}, Caspers, Skalski and Wasilewski studied MASAs of Hecke von Neumann algebras of free product type in analogy with questions about the free group factors \cite{bocaradulescu92}.  In \cite{caspers20}, Caspers obtained results on the absence of Cartan subalgebras and strong solidity for Hecke von Neumann algebras of right-angled type, which is analogue to key questions about the structure of group von Neumann algebras addressed by Ozawa \cite{ozawa04-solid} and Chifan and Sinclair \cite{chifansinclair13}.  The structure of Hecke von Neumann algebras also motivated work of Klisse \cite{klisse2020}, which offered and exploited a new perspective on certain boundaries of Coxeter complexes, earlier introduced in \cite{capracelecureux11} and \cite{lamthomas15}.

Hecke \Cstar-algebras have been put in the spotlight by the work of Caspers, Klisse and Larsen \cite{caspersklisselarsen19}, which obtains a \Cstar-simplicity result for right-angled Hecke algebras with certain deformation parameter (a complete characterization of the relevant parameter range was recently obtained in \cite{klisse2021}).   The investigation of \Cstar-simplicity of group \Cstar-algebras has developed a long tradition since Powers showed that reduced group \Cstar-algebras of free groups are simple \cite{powers75}. Recently it has experienced a breakthrough thanks to the work of Kalantar and Kennedy \cite{kalantarkennedy14-boundaries} and Breuillard, Kalantar, Kennedy and Ozawa \cite{breuillardkalantarkennedyozawa14}.  See also \cite{raum19-bourbaki} for a survey of more recent developments of the field.

It is no surprise that in the study of Hecke operator algebras, the right-angled case has received the most attention.  This is because a natural description as graph products directly relates them to amalgamated free products via a d{\'e}vissage procedure.  Free products and amalgamated free products are of major importance in the world of operator algebras since the advent of Voiculescu's free probability theory \cite{voiculescu85}.  Operator algebraic graph products have recently been introduced by Caspers and Fima \cite{caspersfima17}, laying the basis for applications of work on amalgamated free products to right-angled Hecke operator algebras.  Moreover, the particular case of Hecke operator algebras of free product type lies directly in the scope of work of Dykema \cite{dykema93-hyperfinite, dykema99} and Dykema and R{\o}rdam \cite{dykemarordam1998} on the structure and K-theory of free product \Cstar-algebras and von Neumann algebras, guiding the way to potential future results on Hecke operator algebras of general right-angled type and beyond.

The present article investigates the classification of right-angled Hecke \Cstar-algebras using K-theoretic tools.  K-theory of free product type algebras has been an object of interest for a long time.  In \cite{pimsnervoiculescu82}, Pimsner and Voiculescu provided the first calculation of $\rK$-theory of the reduced group \Cstar-algebras of free groups, and could infer absence of non-trivial projections as well as non-isomorphism for different finite numbers of generators.  Shortly after, Cuntz established a foundational perspective on $\rK$-theory of free product \Cstar-algebras in general, introducing the notion of $\rK$-amenability \cite{cuntz83-k-amenability} together with a conceptually simple calculation of the $\rK$-theory of universal free products \cite{cuntz82-internal-structure, cuntz82-free-products}.  Germain was first to establish unconditional results on the K-theory of free product \Cstar-algebras in \cite{germain96,germain97}.  An extension of this work to amalgamated free products was only recently obtained by Fima and Germain in \cite{fimagermain15}. 

The K-theory of Hecke algebras has been mainly considered in the affine case so far, where Solleveld proved a longstanding conjecture of Higson and Plymen that describes parameter independence of \mbox{K-theory} \cite{solleveld18}.  For deformed Hecke \Cstar-algebras of non-affine type, no results are stated in the literature, although the special case of Coxeter groups of free product type is covered by known results on K-theory for free product \Cstar-algebras. In the undeformed case the K-theory groups in the even case (including the right-angled one) have been computed by S\'anchez-Garcia  in \cite{sanchezgarcia07}. This was achieved  via computing the Bredon K-homology of the underlying Coxeter group via the Davis complex, and then appealing to the Baum-Connes conjecture, valid for the class of groups in question via a combination of the main results of \cite{bozejkojanuszkiewiczspatzier88}  and \cite{higsonkasparov97}. It should be noted that as often when appealing to the Baum-Connes isomorphism the arguments of \cite{sanchezgarcia07} do not lead to the explicit identification of generators of K-theory.

The main results of this paper provide an explicit computation of K-theory in both deformed and undeformed case, including concrete algebraic representants of K-theory classes, described in Notation \ref{not:clique-projections}.  These are projections $p_C \in \Cstar_q(W)$ associated with a clique $C$ in the commutation graph of a right-angled Coxeter group $W$.
\begin{introtheorem}[Corollary {\ref{cor:k-theory-hecke-algebras}}]
  \label{thm:K-theoryINTRO}
  Let $(W,S)$ be a right-angled Coxeter system.  Denote by $\Gamma$ the associated commutation graph and by $\cC$ the set of all, possibly empty, cliques of $\Gamma$.  Then the following statements hold true.
  \begin{itemize}
  \item The map $C \mapsto [p_C]$ induces an isomorphism $\ZZ^\cC \cong \rK_0(\Cstar_{\qm}(W))$.
  \item $\rK_1(\Cstar_{\qm}(W)) = 0$.
  \end{itemize}  
  In particular, $\rK_0(\Cstar_q(W))$ is a free abelian group whose rank is the number of cliques in the commutation graph of $W$.  It is generated by the projections from the copies $\Cstar(\ZZ/2\ZZ) \subset \Cstar(W)$ associated with the Coxeter generators $s \in S$.
\end{introtheorem}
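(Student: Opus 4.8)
The plan is to exploit the graph product description of $\Cstar_{\qm}(W)$ as a reduced graph product of the single-vertex Hecke algebras $A_v := \Cstar_{q_v}(\ZZ/2\ZZ) \cong \CC^2$ over the commutation graph $\Gamma$, and to compute $\rK$-theory by induction on the number of vertices. The engine is the dévissage identity for graph products (Caspers--Fima): for a vertex $v$ one has
$$ \Cstar_{\qm}(W) \;\cong\; A_1 \ast_{B} A_2, \qquad A_1 = \Cstar_{\qm}(W_{\Gamma \setminus v}),\quad B = \Cstar_{\qm}(W_{\Link(v)}),\quad A_2 = A_v \ot B, $$
where $W_{\Gamma\setminus v}$ and $W_{\Link(v)}$ are the right-angled Coxeter groups on the full subgraphs $\Gamma \setminus v$ and $\Link(v)$, the tensor factorisation reflecting that $v$ commutes with all of $\Link(v)$. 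Each inclusion $B \subset A_i$ carries a GNS-faithful conditional expectation coming from the canonical traces on the Hecke algebras, so the Fima--Germain Mayer--Vietoris six-term exact sequence in $\rK$-theory applies to this amalgamated free product. The base cases $\Gamma = \emptyset$ (giving $\CC$, with $\rK_0 = \ZZ[p_\emptyset]$) and $\Gamma$ a single vertex (giving $\CC^2$, with $\rK_0 = \ZZ[p_\emptyset] \oplus \ZZ[p_{\{v\}}]$) match the two cliques present in each case.

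For the inductive step I assume the statement for all graphs with fewer vertices, so that $\rK_1(A_1) = \rK_1(B) = 0$ and $\rK_0(A_1) \cong \ZZ^{\cC(\Gamma\setminus v)}$, $\rK_0(B) \cong \ZZ^{\cC(\Link(v))}$ with clique-projection bases; by the Künneth theorem $\rK_1(A_2) = 0$ and $\rK_0(A_2) \cong \rK_0(A_v) \ot \rK_0(B)$. Writing $A_v = \CC e \oplus \CC f$ with $e = p_{\{v\}}$ and $f = 1 - e$, I split $A_2 = eB \oplus fB$ (both corners $\cong B$), so a clique $C \subseteq \Link(v)$ contributes the two classes $[e\,p_C] = [p_{C \cup \{v\}}]$ and $[f\,p_C] = [p_C] - [p_{C\cup\{v\}}]$, yielding an explicit basis of $\rK_0(A_2)$. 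Since all odd groups among $A_1, A_2, B$ vanish, the six-term sequence collapses to
$$ 0 \longrightarrow \rK_1(\Cstar_{\qm}(W)) \longrightarrow \rK_0(B) \xrightarrow{\;(\iota_1)_* - (\iota_2)_*\;} \rK_0(A_1) \oplus \rK_0(A_2) \longrightarrow \rK_0(\Cstar_{\qm}(W)) \longrightarrow 0, $$
and everything reduces to analysing $\alpha := (\iota_1)_* - (\iota_2)_*$.

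The two inclusions are pinned down combinatorially: $(\iota_1)_*$ sends the basis clique $C \subseteq \Link(v)$ to $[p_C] \in \rK_0(A_1)$, i.e.\ it is the inclusion $\ZZ^{\cC(\Link(v))} \hookrightarrow \ZZ^{\cC(\Gamma\setminus v)}$ of clique sets, while $(\iota_2)_*$, induced by $b \mapsto 1 \ot b = e\,b + f\,b$, is the diagonal $[p_C] \mapsto [e\,p_C] + [f\,p_C]$ into $eB \oplus fB$. The latter is already split injective, whence $\alpha$ is injective and $\rK_1(\Cstar_{\qm}(W)) = 0$. For $\rK_0$ I compute $\coker \alpha$: the relations $[p_C]_{A_1} = [e\,p_C] + [f\,p_C]$ let me eliminate each generator $[f\,p_C]$, leaving exactly $\{[p_D] : D \in \cC(\Gamma\setminus v)\} \cup \{[p_{C\cup\{v\}}] : C \in \cC(\Link(v))\}$ with no further relations. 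Since every clique of $\Gamma$ either avoids $v$ (and is a clique of $\Gamma\setminus v$) or has the form $C \cup \{v\}$ with $C \in \cC(\Link(v))$, this is precisely the free basis $\{[p_K] : K \in \cC(\Gamma)\}$, completing the induction.

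The main obstacle is not the combinatorics but the correct deployment of the amalgamated-free-product machinery: one must verify that the Fima--Germain hypotheses (faithfulness of the GNS representations of the conditional expectations, and the $\rK$-amenability/exactness ensuring reduced and full amalgamated products agree on $\rK$-theory) genuinely hold at every stage of the dévissage, uniformly in the parameter $\qm$. This is exactly where the KK-equivalence $\Cstar_{\qm}(W) \sim_{\mathrm{KK}} \Cstar(W)$ and the UCT advertised in the introduction do the decisive work, transporting the computation to, and borrowing good homological behaviour from, the undeformed group \Cstar-algebra; the final clause on generators then follows since under this equivalence $e = p_{\{v\}}$ is carried to the canonical projection in the copy $\Cstar(\ZZ/2\ZZ) \subset \Cstar(W)$. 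A secondary point requiring care is the identification of $(\iota_2)_*$ through the tensor splitting, i.e.\ checking that the Künneth isomorphism intertwines $1 \ot b$ with the diagonal and carries $e \ot p_C$ to the class $[p_{C \cup \{v\}}]$.
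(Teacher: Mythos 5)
Your computation is correct, but it is organised differently from the paper's. The paper proceeds in two steps: Section 3 first establishes that the canonical map $\Cstar(W) \to \Cstar_\qm(W)$ is a KK-equivalence (via a triangulated-category argument showing that amalgamated free products preserve KK-equivalences, combined with the Caspers--Fima d\'evissage), and Theorem \ref{thm:k-theory-computation} then computes $\rK_*(\Cstar(W))$ by induction using the Fima--Germain six-term sequence for \emph{full} amalgamated free products; Corollary \ref{cor:k-theory-hecke-algebras} is simply the concatenation of the two. You instead run the induction directly on the deformed algebra, applying the Fima--Germain sequence to the \emph{reduced} amalgam produced by the d\'evissage. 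This is legitimate: the canonical conditional expectations onto sub-graph-products are GNS-faithful by construction of the reduced graph product, which is exactly the hypothesis under which the results of Fima--Germain cover reduced amalgamated free products. Your route is shorter if one only wants the K-theory groups and the clique basis, and your explicit cokernel analysis of $\alpha = (\iota_1)_* - (\iota_2)_*$ (eliminating the generators $[f\,p_C]$ via the relations $[p_C]_{A_1} = [e\,p_C] + [f\,p_C]$) is a hands-on substitute for the paper's rank-counting argument via Lemma \ref{lem:counting-cliques}. What it does not deliver are the paper's standalone intermediate results, namely the KK-equivalence $\Cstar(W) \sim_{\mathrm{KK}} \Cstar_\qm(W)$ (parameter independence) and the UCT, which the paper uses elsewhere.

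One point in your final paragraph must be corrected, as it inverts the logical order. You claim that verifying the Fima--Germain hypotheses ``is exactly where the KK-equivalence $\Cstar_\qm(W) \sim_{\mathrm{KK}} \Cstar(W)$ and the UCT do the decisive work.'' That is backwards. The hypotheses in question --- GNS-faithfulness of the conditional expectations at each stage of the d\'evissage --- are verified directly from the reduced graph product construction and have nothing to do with the UCT; and the KK-equivalence with the undeformed algebra is not a legitimate input to your induction, since in the paper it is itself proved by an induction of precisely the same shape (Theorem \ref{thm:reduction-graph-product-kk-equivalence}), so invoking it to justify your six-term sequences would be circular. Your proof stands on its own once you cite GNS-faithfulness of the canonical expectations; the comparison with $\Cstar(W)$ is a by-product of the paper's route, not a prerequisite of yours. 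A minor further point: your induction should also dispose of the degenerate case where $\Gamma$ is a complete graph, in which every vertex satisfies $\Star(v) = \Gamma$ and the d\'evissage amalgam collapses to $A_1 *_B A_2 = A_2$ with $A_1 = B$; the argument survives (or one computes the tensor product $(\CC^2)^{\otimes |S|}$ directly, as the paper does), but it deserves a sentence.
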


Our proof of this result reduces the problem of K-theory computations to the full group \Cstar-algebra of the Coxeter group in question, by means of a KK-equivalence, analogue to K-amenability results.  Our key ingredients to establish this KK-equivalence are Fima's and Germain's work on amalgamated free products \cite{fimagermain15}, combined with the d{\'e}vissage procedure for graph product \Cstar-algebras obtained by Caspers and Fima \cite{caspersfima17}.  The K-theory computations for full group \Cstar-algebras of right-angled Coxeter groups then make use of six-term exact sequences established in \cite{fimagermain15}; this in particular gives an alternative approach to \cite[Corollary 7.3]{sanchezgarcia07}. 

Classifying \Cstar-algebras by their K-theory invariants has been one of central topics of the theory of operator algebras for almost fifty years. To wit, Elliott's classification programme for nuclear \mbox{\Cstar-algebras} \cite{elliott76} saw recent structural breakthroughs,  see e.g. \cite{gonglinniu15,tikuisiswhitewinter17,cgstw19}.  But even in the early days \mbox{K-theory} proved to be a useful invariant beyond the nuclear world too.  In particular the aforementioned result of Pimsner and Voiculescu \cite{pimsnervoiculescu82}  distinguished the reduced \Cstar-algebras of different free groups by computing their $\rK$-theory.  Generally speaking, non-isomorphism problems regarding group \mbox{\Cstar-algebras}, even in the amenable realm, are highly non-trivial, and only recent years brought the first rigidity results regarding algebras of this type; see \cite{eckhardtraum18} and references therein. Note that the group \Cstar-algebra of a right-angled Coxeter group depends only on the associated Coxeter graph, and the fact that the correspondence between (isomorphism classes of) Coxeter graphs and (isomorphism classes of) Coxeter groups is bijective in the right-angled case was established only recently (see \cite{radcliffe03} and references therein), whereas for general Coxeter groups the bijectivity is known to fail. 

Theorem \ref{thm:K-theoryINTRO} implies that there is no hope to use K-theory groups in themselves to distinguish Hecke \Cstar-algebras associated with different right-angled Coxeter groups, as is shown in Example~\ref{ex:identical-k-theory}.  Adding further data, the explicit nature of our K-theory calculations allow us to identify the trace pairing in K-theory in Proposition~\ref{prop:trace-pairing}, but also this additional information does not suffices to identify the Coxeter type, as Example~\ref{ex:identical-trace-pairing} shows.  We point out that classification results for formally similar structures such as \Cstar-algebras of right-angled Artin monoids achieved by Eilers, Li and Ruiz \cite{eilersliruiz16} are based on amenability of groupoids naturally arising when studying semigroup operator algebras.  Similar groupoid models cannot be expected for operator algebras arising from right-angled Coxeter or Artin groups.

Fixing a Coxeter type one might hope that K-theoretic information, and in particular the trace pairing suffices to recover the deformation parameter of a Hecke \Cstar-algebra.  The discussion of \mbox{K-theoretic} invariants for solving this classification problem is the second main contribution of this article.  For the purposes of our paper it is convenient to consider the following version of the Elliott invariant of a unital \Cstar-algebra $A$.

\begin{gather*}
  \label{def:Ell}
  \Ell(A) = (\rK_0(A), \rK_1(A),  [1]_{\rK_0(A)}, \rT(A), \rho_A)
  \eqcomma
\end{gather*}
where $\rT(A)$ denotes the trace simplex of $A$ and $\rho_A$ is the trace pairing of $\rK_0(A)$ and $\rT(A)$.  Combining our K-theory calculations with Dykema's work on the structure of free product \Cstar-algebras, which classified in particular their traces, allows us to determine this unordered Elliott invariant for Hecke \Cstar-algebras of free product type.  Focusing on the single parameter case, we obtain the following result, which illustrates both the power and limitations of K-theoretic invariants when it comes to classifying the algebras in our class.
\begin{introtheorem}[Theorem {\ref{thm:classification}}]
  \label{thm:classification-intro}
  Suppose $n \in \NN_{\geq 3}$ and let $q_1,q_2 \in (0,1]$.  Then the following statements hold true.
  \begin{itemize}
  \item If $\Cstar_{q_1}(\ZZ/2\ZZ^{*n}) \cong \Cstar_{q_2}(\ZZ/2\ZZ^{*n})$ and one of $q_1$ or $q_2$ lies in either of the sets $(0,\frac{1}{n-1})$, $\{\frac{1}{n-1}\}$, $(\frac{1}{n-1}\, 1]$, then so does the other.
  \item Assume $\Cstar_{q_1}(\ZZ/2\ZZ^{*n}) \cong \Cstar_{q_2}(\ZZ/2\ZZ^{*n})$ and $q_1, q_2 \in (\frac{1}{n-1}, 1]$.
    \begin{itemize}
    \item If $q_1$ or $q_2$ is irrational, then $q_1 = q_2$.
    \item If $q_1$ or $q_2$ is rational, then so is the other. Further, $\frac{1}{1 + q_1}$ and $\frac{1}{1 + q_2}$ have the same order in $\QQ/\ZZ$.
    \item If $q_1, q_2$ are rational and $\frac{1}{1 + q_1}$, $\frac{1}{1 + q_2}$ have the same order in $\QQ/\ZZ$, then the unordered Elliott invariants of $\Cstar_{q_1}(\ZZ/2\ZZ^{*n})$ and $\Cstar_{q_2}(\ZZ/2\ZZ^{*n})$ are isomorphic.
    \end{itemize}
  \item If $q_1, q_2 \in (0,\frac{1}{n-1})$, then the unordered Elliott invariants of $\Cstar_{q_1}(\ZZ/2\ZZ^{*n})$ and $\Cstar_{q_2}(\ZZ/2\ZZ^{*n})$ are isomorphic.
  \end{itemize}
\end{introtheorem}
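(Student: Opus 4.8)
The plan is to realize $\Cstar_{\qm}(\ZZ/2\ZZ^{*n})$ as a reduced free product and feed it into Dykema's structure theory. Since $\ZZ/2\ZZ^{*n}$ is the graph product over the edgeless graph on $n$ vertices, the graph-product picture underlying Theorem~\ref{thm:K-theoryINTRO} identifies
\[ \Cstar_{\qm}(\ZZ/2\ZZ^{*n}) \cong \ast_{i=1}^n (\CC^2,\phi_q), \]
the reduced free product of $n$ copies of $(\CC^2,\phi_q)$, where $\phi_q$ is the Hecke trace whose two minimal projections carry weights $\tfrac1{1+q}$ and $\tfrac{q}{1+q}$. Writing $\lambda=\tfrac{q}{1+q}$ for the smaller weight and $p_i$ for the corresponding minimal projection of the $i$-th factor, the free join of $p_1,\dots,p_n$ has trace $\min(1,n\lambda)$; hence $e:=\bigwedge_i(1-p_i)=1-\bigvee_i p_i$ is a nonzero central projection with $eAe=\CC e$ precisely when $n\lambda<1$, i.e.\ when $q<\tfrac1{n-1}$. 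I would invoke Dykema's classification of reduced free products of finite-dimensional algebras \cite{dykema99,dykemarordam1998} to upgrade this to a trichotomy: $A_q\cong\CC\oplus B_q$ with $B_q$ simple for $q<\tfrac1{n-1}$, and $A_q$ simple with faithful canonical trace $\tau_q=\ast_i\phi_q$ (hence stably finite, with unique trace) for $q\ge\tfrac1{n-1}$.

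Next I would read off the Elliott data. Theorem~\ref{thm:K-theoryINTRO} gives $\rK_0=\ZZ^{n+1}$ with basis the clique classes $[1],[p_{s_1}],\dots,[p_{s_n}]$ and $\rK_1=0$, while $\tau_q(p_{s_i})=\tfrac1{1+q}$; thus the image of the trace pairing is the subgroup $G_q:=\ZZ+\ZZ\tfrac1{1+q}\subset\RR$. For $q<\tfrac1{n-1}$ the extreme traces are the character $\delta$ on $\CC e$ and the trace $\sigma$ of $B_q$, and a direct computation using $\tau_q(e)=\tfrac{1-(n-1)q}{1+q}$ shows that, in the clique basis, $\delta$ and $\sigma$ pair to the fixed vectors $(1,1,\dots,1)$ and $(1,\tfrac{n-1}{n},\dots,\tfrac{n-1}{n})$ -- crucially independent of $q$, the cancellation $\tfrac{\tfrac1{1+q}-\tau_q(e)}{1-\tau_q(e)}=\tfrac{n-1}{n}$ being the key point. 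This settles the third bullet: for all $q\in(0,\tfrac1{n-1})$ the datum $\Ell(A_q)=(\ZZ^{n+1},0,[1],\rT(A_q),\rho_{A_q})$ is literally the same fixed $1$-simplex with the above constant pairing, so the unordered Elliott invariants are isomorphic.

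For the middle bullet I would use that $A_q$ is simple with unique trace when $q\in(\tfrac1{n-1},1]$, so the trace pairing is canonical and any isomorphism of unordered Elliott invariants forces $G_{q_1}=G_{q_2}$ as subgroups of $\RR$. The number theory is then immediate: $G_q\cong\ZZ^2$ iff $\tfrac1{1+q}$ is irrational and $G_q\cong\ZZ$ otherwise, so rationality is preserved; for irrational parameters the rigidity of $\ZZ+\ZZ\theta$ with $\theta=\tfrac1{1+q}\in[\tfrac12,1)$ forces $\tfrac1{1+q_1}=\tfrac1{1+q_2}$, i.e.\ $q_1=q_2$; for rational parameters $G_q=\tfrac1b\ZZ$ with $b$ the order of $\tfrac1{1+q}$ in $\QQ/\ZZ$, so equal groups mean equal orders. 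The converse is a lattice computation: equal orders give $G_{q_1}=G_{q_2}$, and two surjections $\ZZ^{n+1}\twoheadrightarrow\tfrac1b\ZZ$ sending the primitive class $[1]$ to $1$ differ by an automorphism of $\ZZ^{n+1}$, producing the required isomorphism of unordered Elliott invariants.

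The first bullet finally splits into two assertions. Separating $(0,\tfrac1{n-1})$ from the rest is clean: $A_q$ is non-simple there (it carries the abelian summand $\CC e$) and simple otherwise, and simplicity is an isomorphism invariant. Separating the single critical value $\{\tfrac1{n-1}\}$ from the supercritical interval $(\tfrac1{n-1},1]$ is the main obstacle, because the unordered Elliott invariant cannot see it: for instance, when $n=5$ the critical $q=\tfrac14$ (with $\tfrac1{1+q}=\tfrac45$) and the supercritical $q=\tfrac23$ (with $\tfrac1{1+q}=\tfrac35$) both yield simple algebras with unique trace, $\rK_0=\ZZ^6$, $\rK_1=0$ and pairing group $\tfrac15\ZZ$, hence isomorphic unordered Elliott invariants. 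I therefore expect the critical value to be isolated by a finer invariant drawn from Dykema's and Dykema--R\o{}rdam's analysis of projections in these free products -- the threshold $n\lambda=1$ being exactly where the small projections $p_i$ fill up the unit with no slack -- for example through the real rank, the stable rank, or the precise range of $\tau_q$ on the projections of $A_q$. Pinning down such an invariant and verifying that it distinguishes $n\lambda=1$ from $n\lambda>1$ is the crux of the argument.
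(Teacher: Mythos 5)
Your treatment of the second and third bullets is essentially sound. The subcritical computation of the two extreme traces (pairing vectors $(1,\dots,1)$ and $(1,\tfrac{n-1}{n},\dots,\tfrac{n-1}{n})$, independent of $q$) is exactly the computation in Example~\ref{ex:k-elliott-invariant-free-product} of the paper, and your handling of the supercritical case via the image subgroup $\ZZ+\tfrac{1}{1+q}\ZZ$ together with a lattice argument is a legitimate, more elementary alternative to the paper's route, which instead invokes the Cabrer--Mundici classification of affine $\GL_n(\ZZ)$-orbits (Lemma~\ref{lem:affine-orbits}, where the hypothesis $n\geq 3$ enters). Be aware, though, that your asserted lattice fact --- that two surjections $\ZZ^{n+1}\twoheadrightarrow\tfrac1b\ZZ$ agreeing on the order unit differ by an automorphism fixing that unit --- is precisely transitivity of the image of $\GL_n(\ZZ)$ in $\GL_n(\ZZ/b\ZZ)$ on unimodular rows; it holds for $n\geq 2$ but fails for $n=1$, so it needs a proof rather than an assertion.

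The genuine gap is in the first bullet, and it stems from a misquotation of Dykema's theorem. You claim $A_q$ is simple for all $q\geq\tfrac{1}{n-1}$, including the critical value. This is false: by \cite[Theorem 4.8]{dykema99}, as quoted in Example~\ref{ex:k-elliott-invariant-free-product}, at $q=\tfrac{1}{n-1}$ the algebra $\Cstar_q(\ZZ/2\ZZ^{*n})$ has a character whose kernel is a simple \emph{non-unital} \Cstar-algebra; in particular it is not simple, and it is also not of the form $\CC\oplus B$, since the kernel of the character is not unital. Once the trichotomy is stated correctly, the first bullet is immediate and requires no finer invariant such as real or stable rank: subcritical algebras are non-simple with a character whose kernel is unital; the critical algebra is non-simple with a character whose kernel is non-unital; supercritical algebras are simple. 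These three structural descriptions are mutually exclusive and each is an isomorphism invariant, which is why the paper disposes of the whole first bullet in a single sentence. Your purported example of indistinguishability (for $n=5$, the critical $q=\tfrac14$ versus the supercritical $q=\tfrac23$) fails for the same reason: at $q=\tfrac14$ the algebra is non-simple and carries at least two distinct tracial states (the character and the faithful canonical trace, which differ on the generating projections), so its trace simplex is not a point and its unordered Elliott invariant is not that of a simple algebra with unique trace. What you flagged as "the crux of the argument" is therefore a non-issue once Dykema's structure theorem is applied correctly.
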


It is natural to wonder why the version of the Elliott invariant used above does not involve the positive part of $\rK_0$.  One can expect that the order in $\rK_0$ is determined by the pairing with traces.  This indeed is known to hold in the undeformed case of $\Cstarred((\ZZ/2\ZZ)^{* n})$,  $n \geq 3$, as follows from the main result of \cite{dykemarordam1998}.  That theorem, which as far as we know is currently the best available statement regarding the relationship between the order of $\rK_0$ and traces for free products, assumes the so-called \emph{Avitzour conditions}.  These are known to be satisfied only in the undeformed case and we have not been able to determine the order structure of $\rK_0$ in the general case.  The calculation of ordered K-theory for free products not satisfying the Avitzour conditions remains an interesting open problem.

Finally we should note that it is natural to expect that the deformed algebras mentioned in Theorem~\ref{thm:classification-intro} are non-isomorphic whenever $q_1 \neq q_2$.  The opposite statement would solve the free group factor conjecture by showing that all the free group factors must be isomorphic.

\subsection*{Organisation of the article}

After this introduction, in Section~\ref{sec:preliminaries}, we recall basic facts concerning the graph products of groups and algebras, right-angled Coxeter groups, Hecke algebras and K-theory of amalgamated free products. 
In Section~\ref{sec:reduction-kk-equivalence} we establish the KK-equivalence result which implies that the K-theory groups of Hecke \Cstar-algebras do not depend on the deformation parameter. Section~\ref{sec:k-theory-computation} contains our first main result, Theorem \ref{thm:K-theoryINTRO}.  We compute the K-theoretic invariants of the Hecke-\Cstar-algebras of right-angled type, and discuss limitations of K-theoretic recovery of the Coxeter type.  In Section \ref{sec:classification}, we study the scope and limitations of K-theoretic classification of Hecke \Cstar-algebras of free product type in detail and prove Theorem \ref{thm:classification-intro}.

\subsection*{Acknowledgements}

S.R.\ was supported by the Swedish Research Council through grant number 2018-04243 and by the European Research Council (ERC) under the European Union's Horizon 2020 research and innovation programme (grant agreement no. 677120-INDEX).
A.S.\ was partially supported by the National Science Center (NCN) grant no.~2014/14/E/ST1/00525. 

The major part of our work on this article was completed while S.R. was visiting IM PAN in autumn 2020.  We would like to thank Piotr Nowak for making this visit possible.  We are also grateful to him for helpful conversations on the structure and K-theory of Hecke \Cstar-algebras. We thank Julian Kranz for clarifying comments regarding the UCT results and Klaus Thomsen and Jamie Gabe for pointing out an imprecision in an earlier version of this article. Finally we are very grateful to  the referee for several useful comments, and in particular for pointing to us the reference \cite{sanchezgarcia07}.

\section{Preliminaries}
\label{sec:preliminaries}

In this section we recall basic facts concerning the graph products of groups and algebras, right-angled Coxeter groups, Hecke algebras and K-theory of amalgamated free products.  We also show how in the case of algebras with free K-theory one can deduce from six-term exact sequences stability of the UCT under the amalgamated free products. 

\begin{notation}
  Multi-indices will be denoted by a bold font throughout this article. We assume throughout that the \Cstar-algebras we consider are separable.
\end{notation}

\subsection{K-theory and the UCT for amalgamated free products}
\label{sec:k-theory-amalgamated-free-products}

For information regarding $\rK$-theory and $\KK$-theory we refer to \cite{blackadar98}.  A detailed introduction to reduced and universal amalgamated free products can be found in \cite[Section 2]{fimagermain15}.

The identification of the $\rK$-theory groups of amalgamated free products has been achieved by Fima and Germain in \cite{fimagermain15}.  Concretely speaking, \cite[Corollary 4.12]{fimagermain15} implies that if $A_1, B, A_2$ are unital separable \Cstar-algebras, with unital inclusions $\alpha_1: B \to A_1$ and $\alpha_2: B \to A_2$, then for every separable \Cstar-algebra $C$ there are the following six-term short exact sequences in $\rK$-theory and $\KK$-theory:
\begin{gather*}
\xymatrix{
	\rK_0(B) \ar[r] & \rK_0(A_1) \oplus \rK_0(A_2) \ar[r] & \rK_0(A_1 *_B A_2) \ar[d] \\
	\rK_1(A_1 *_B A_2) \ar[u] & \rK_1(A_1) \oplus \rK_1(A_2) \ar[l] & \rK_1(B) \ar[l]
}
\end{gather*}
\begin{gather*}
\xymatrix{
	\KK^0(B,C) \ar[d] & \KK^0(A_1, C) \oplus \KK^0(A_2, C) \ar[l] & \KK^0(A_1 *_B A_2, C) \ar[l] \\
	\KK^1(A_1 *_B A_2, C) \ar[r] & \KK^1(A_1, C) \oplus \KK^1(A_2, C) \ar[r] & \KK^1(B, C) \ar[u]
}
\end{gather*}

Recall from \cite[Chapter 23]{blackadar98} that a separable \Cstar-algebra $A$ satisfies the UCT (the Universal Coefficient Theorem) if the natural short exact sequence 
\begin{gather*}
0 \to \Ext_\ZZ^1( \rK_*(A), \rK_*(C)) \lra \KK^*(A,C) \stackrel{\gamma}{\lra} \Hom(\rK_*(A), \rK_*(C)) \to 0
\end{gather*}
is exact for every separable \Cstar-algebra $C$.  Here the map $\gamma$ is the dual of the $\KK$-pairing $\rK_*(A) \otimes \KK^*(A,C) \to \rK_*(C)$, and as such is natural in $A$ and $C$. We shall now record a consequence of the results of \cite{fimagermain15}. 

\begin{proposition}
  \label{thm:UCT-am}
  Let $A_1, A_2, B$ be  unital separable \Cstar-algebras with unital inclusions with conditional expectations $\alpha_1: B \to A_1$ and $\alpha_2: B \to A_2$.  If $A_1, A_2, B$ satisfy the UCT,
   then $A_1 *_B A_2$ also satisfies the UCT.
\end{proposition}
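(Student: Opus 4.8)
The plan is to fix a separable \Cstar-algebra $C$ and to show that the UCT sequence for $D := A_1 *_B A_2$ is exact, using the two six-term exact sequences recalled above together with the naturality of $\gamma$. The conceptual backbone is that the class $\cN$ of separable \Cstar-algebras satisfying the UCT coincides with the bootstrap class and is, by the Rosenberg--Schochet theory, a thick subcategory of the Kasparov category closed under the two-out-of-three property for distinguished triangles. The $\KK$-theoretic six-term exact sequence of Fima--Germain, being natural in $C$, is precisely the long exact sequence associated to a distinguished triangle $B \to A_1 \oplus A_2 \to D \to \Sigma B$ in $\KK$, induced by the extension underlying the amalgamated free product. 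Since $\cN$ is closed under finite direct sums we have $A_1 \oplus A_2 \in \cN$, and since $B \in \cN$ by hypothesis, two-out-of-three yields $D \in \cN$, i.e.\ $D$ satisfies the UCT.

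For a more self-contained argument staying at the level of the stated six-term sequences, I would run a diagram chase. First I would apply the derived functor $\mathrm{RHom}_\ZZ(-, \rK_*(C))$ to the $\rK$-theory six-term exact sequence; because $\ZZ$ has global dimension one, the only nonvanishing cohomology is $\Hom(\rK_*(-), \rK_*(C))$ in degree zero and $\Ext^1_\ZZ(\rK_*(-), \rK_*(C))$ in degree one, so this produces a single long exact sequence interleaving the $\Hom$- and $\Ext^1$-terms for $B$, $A_1 \oplus A_2$ and $D$. Next I would compare this sequence with the $\KK^*(-, C)$ six-term exact sequence. Naturality of $\gamma$ together with the $\Ext$-part of the UCT promotes the columns into the three UCT short exact sequences, which for $A_1$, $A_2$, $B$ (hence for $A_1 \oplus A_2$) are exact by hypothesis. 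A five-lemma argument along the resulting ladder then forces the remaining column, the UCT sequence for $D$, to be exact as well.

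The main obstacle is not the bookkeeping of the five lemma but establishing the compatibility that makes it applicable: one must know that $\gamma$, and more importantly the inclusion of the $\Ext^1$-term into $\KK^*$, commutes with the connecting homomorphisms of the two six-term sequences. This is exactly the information encoded by saying that the Fima--Germain sequences come from a distinguished triangle in $\KK$ and that the whole UCT short exact sequence --- not merely the map $\gamma$ --- is natural; granting this, the triangulated two-out-of-three argument of the first paragraph is the cleanest route and avoids any delicate chase. I would therefore present the triangulated argument as the proof, invoking \cite{fimagermain15} for the triangle and the Rosenberg--Schochet theory for the closure of $\cN$, and then remark that when all the $\rK$-groups involved are free the $\Ext^1$-terms vanish and the diagram chase collapses to the statement that $\gamma_D$ is an isomorphism, recovering the elementary deduction advertised for algebras with free K-theory.
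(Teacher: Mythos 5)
Your triangulated argument is correct and is essentially the paper's own proof: the paper likewise invokes Fima--Germain's Theorem 4.1 to identify the suspension of $A_1 *_B A_2$, up to KK-equivalence, with the mapping cone of $\alpha_1 \oplus \alpha_2 \colon B \to A_1 \oplus A_2$, and then concludes via closure of the UCT class under mapping cones, finite direct sums and suspensions, which is exactly your two-out-of-three property for distinguished triangles. Your closing remark about the case of free $\rK$-groups also matches the paper's own comment, made right after the proposition, that in that situation a direct proof using only the six-term exact sequences is possible.
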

\begin{proof}
Under the assumptions above \cite[Theorem 4.1]{fimagermain15} says that the suspension of $A_1 *_B A_2$ is KK-equivalent to the mapping cone determined by the direct sum of the inclusion maps $B \subset A_1$ and $B \subset A_2$ (see the proof of Proposition \ref{prob:amalgamated-free-products-preserve-kk-equivalence} for more details). As mapping cones of the algebras satisfying the UCT also satisfy the UCT, and the property of satisfying the UCT is stable under taking direct products and also under passing to suspensions, the result follows.

\end{proof}

If in the context of the above proposition one assumes in addition that the $\rK_*$-groups of $A_1$, $A_2$, $B$ and $A_1 *_B A_2$ are free (as will be the case in our application), one can give a direct proof, using only the six-term exact sequences.

\subsection{Graph products of groups and algebras}
\label{sec:graph-products}

Throughout the paper, by a graph $\Gamma=(\rV, \rE)$ we will understand a finite, and possibly empty, simplicial graph.  Formally, $\rV$ denotes a finite set and $\rE \subset \rV \times \rV$ is a symmetric subset that does not intersect  the diagonal.  We will sometimes write $\rV(\Gamma)$ for $V$ and $\rE(\Gamma)$ for $E$.
We denote the \emph{induced subgraph} on a subset $V' \subset \rV(\Gamma)$ by $\Gamma|_{V'}$, so that $\rE(\Gamma|_{V'}):= \rE \cap (\rV' \times \rV')$.  Given a vertex $v \in \rV$, we denote by $\Gamma \setminus v$ the induced subgraph of $\Gamma$ with vertex set $\rV \setminus \{v\}$.  A \emph{clique} in a graph $\Gamma$ is a complete subgraph, i.e.\ a subgraph $\Gamma'$ of $\Gamma$ such that $\rE(\Gamma') = \Gamma' \times \Gamma' \setminus\{(\gamma, \gamma):\gamma \in \Gamma'\}$.  For an element $w \in \rV$ we consider the graphs $\Link(w)= \Gamma|_{\{v \in \rV:(v,w) \in \rE \}}$ and $\Star(w) = \Gamma|_{\{v \in \rV:(v,w) \in \rE \} \cup \{w\}}$.

We will need in Section \ref{sec:k-theory-computation} a simple lemma counting cliques in a given graph.  Recall that the empty set is a clique in every graph.
\begin{lemma}
  \label{lem:counting-cliques}
  Denote by $N(\Gamma)$ the number of cliques of arbitrary size in a graph $\Gamma$.  Then for all $v \in \rV(\Gamma)$
  \begin{gather*}
    N(\Gamma) = N(\Link(v)) + N(\Gamma \setminus v)
    \eqstop
  \end{gather*}
\end{lemma}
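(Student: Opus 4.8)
The plan is to prove the identity by partitioning the cliques of $\Gamma$ according to whether or not they contain the distinguished vertex $v$. Writing $\cC(\Gamma)$ for the set of all cliques of $\Gamma$ (including the empty one), I would split
\begin{gather*}
  \cC(\Gamma) = \{ C \in \cC(\Gamma) : v \notin C \} \sqcup \{ C \in \cC(\Gamma) : v \in C \}
  \eqcomma
\end{gather*}
and then identify the cardinality of each piece separately. Since $N$ counts cliques of arbitrary size, it suffices to produce a natural bijection for each of the two pieces.

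For the first piece, a clique of $\Gamma$ that avoids $v$ is precisely a complete subgraph whose vertex set lies in $\rV(\Gamma) \setminus \{v\}$; since $\Gamma \setminus v$ is the induced subgraph on that vertex set and being a clique is an induced property, these are exactly the cliques of $\Gamma \setminus v$. Hence the first piece has cardinality $N(\Gamma \setminus v)$, and I would note that this count includes the empty clique of $\Gamma$, matching the empty clique of $\Gamma \setminus v$.

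For the second piece, I would set up the map $C \mapsto C \setminus \{v\}$ from cliques containing $v$ to subsets of $\Link(v)$. The key observation is that if $C$ is a clique containing $v$, then every other vertex of $C$ is adjacent to $v$, so $C \setminus \{v\} \subseteq \rV(\Link(v))$, and it is a clique there because adjacency is inherited. Conversely, for any clique $D$ of $\Link(v)$ (possibly empty) the set $D \cup \{v\}$ is a clique of $\Gamma$: its vertices are pairwise adjacent because $D$ is a clique and every vertex of $D$ lies in $\Link(v)$, hence is adjacent to $v$. These two assignments are mutually inverse, giving a bijection between cliques of $\Gamma$ containing $v$ and cliques of $\Link(v)$; in particular the singleton $\{v\}$ corresponds to the empty clique of $\Link(v)$. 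Thus the second piece has cardinality $N(\Link(v))$, and adding the two counts yields the claimed formula.

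I do not anticipate a genuine obstacle here; the only point requiring care is the bookkeeping of the empty clique under the paper's convention (recorded just before the lemma). Making the two bijections explicit, as above, ensures that the empty clique of $\Gamma$ is counted once in the $\Gamma \setminus v$ term and that no clique is double-counted or omitted, so the additive identity follows immediately.
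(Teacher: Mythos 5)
Your proof is correct and follows essentially the same route as the paper: partition the cliques of $\Gamma$ by whether they contain $v$, identify the first class with the cliques of $\Gamma \setminus v$, and the second with the cliques of $\Link(v)$ via $C \mapsto C \setminus \{v\}$. Your version merely spells out the bijections and the empty-clique bookkeeping that the paper leaves implicit.
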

\begin{proof}
  For a clique $C$ in $\Gamma$, there are precisely two options.  Either $v \in \rV(C)$ and $C$ is contained in $\Star(v)$ or $v \notin \rV(C)$ and $C$ is contained in $\Gamma \setminus v$.  In the former case, $C \setminus v$ is a clique in $\Link(v)$, and every clique in $\Link(v)$ is of this form.
\end{proof}

Suppose that $\Gamma=(\rV,\rE)$ is a graph and that for every $v \in \rV$ we have an associated group $G_v$. Based on this data, Green introduced in \cite{green-thesis} a construction of the graph product of groups, leading to a group $G_\Gamma$, generated by copies of all $G_v$, $v \in \rV$, such that $G_v$ commutes with $G_w$ if and only if $(v,w) \in \rE$.  In \cite{caspersfima17} Caspers and Fima transferred this construction to the operator algebra world, taking into account various possible variants.  For us the key notions will be the \emph{universal} graph product of unital \Cstar-algebras, and the \emph{reduced} graph product of unital \Cstar-algebras equipped with faithful states.  Thus if for every $v \in \rV$ we are given a unital \Cstar-algebra $A_v$ (with a faithful state $\vphi_v$), we have the corresponding universal graph product  $\prod_{\Gamma} A_v$ (and the reduced graph product  $\prod_{\Gamma} (A_v, \vphi_v)$). The constructions are closely related to the amalgamated free product of \Cstar-algebras, and in particular \cite[Section 2]{caspersfima17} contains the following identifications, inspired by Serre's d\'evissage procedure for groups acting on trees. 
\begin{theorem}[See Theorem 2.15 and Remark 2.16 of {\cite{caspersfima17}}]
  \label{thm:devissage-graph-products}
  Let $\Gamma = (\rV,E)$ be a  graph and $(A_v)_{v \in \rV}$ a family of unital \Cstar-algebras.  Let $w \in \rV$.
  \begin{itemize}
  \item There is an isomorphism of \Cstar-algebras $\prod_{\Gamma} A_v \to \left (\prod_{\Star(w)} A_v \right ) *_{\prod_{\Link(w)} A_v} \left ( \prod_{\Gamma \setminus w} A_v \right )$, commuting with the natural inclusions of $(A_v)_{v \in \rV}$ in each side. 
  \item If $\vphi_v$ is a faithful state on $A_v$ for all $v \in \rV$, then there is an isomorphism of \Cstar-algebras $\prod_{\Gamma} (A_v, \vphi_v) \to \left (\prod_{\Star(w)} (A_v,\vphi_v) \right ) *_{\prod_{\Link(w)} (A_v, \vphi_v)} \left ( \prod_{\Gamma \setminus w} (A_v, \vphi_v) \right )$, commuting with the natural inclusions of $(A_v)_{v \in \rV}$ in each side. 
  \end{itemize}
\end{theorem}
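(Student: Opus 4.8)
The plan is to prove the first (universal) isomorphism by playing the universal properties of the two sides against each other on generators, and then to obtain the second (reduced) isomorphism by upgrading the resulting algebraic identification to the completions via a comparison of the canonical states and conditional expectations. The combinatorial input that makes everything work is the following observation about the vertex set. We have $\rV = \rV(\Star(w)) \cup \rV(\Gamma \setminus w)$, the two pieces overlap exactly in $\rV(\Link(w))$, and $w$ is the only vertex of $\Star(w)$ that does not lie in $\Gamma \setminus w$. Consequently \emph{every} edge of $\Gamma$ is contained entirely in $\Star(w)$ or entirely in $\Gamma \setminus w$: if an edge $(v,v')$ is not contained in $\Gamma \setminus w$, then one endpoint, say $v$, must equal $w$, and then $v' \in \Link(w) \subseteq \Star(w)$, so the whole edge lies in $\Star(w)$. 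Since $\Star(w)$, $\Link(w)$ and $\Gamma \setminus w$ are induced subgraphs, their edge sets are exactly $\rE$ restricted to the respective vertex sets.

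For the universal graph product I would use its defining universal property: a unital $*$-homomorphism out of $\prod_{\Gamma} A_v$ is the same datum as a family of unital $*$-homomorphisms out of the $A_v$ whose images satisfy the commutation relations prescribed by $\rE$. The natural inclusions of the $A_v$ into $\bigl(\prod_{\Star(w)} A_v\bigr) *_{\prod_{\Link(w)} A_v} \bigl(\prod_{\Gamma \setminus w} A_v\bigr)$ — through the left factor when $v \in \Star(w)$ and through the right factor when $v \neq w$, consistently for $v \in \Link(w)$ thanks to the amalgamation — satisfy exactly these relations, by the edge partition above. Hence the universal property yields a homomorphism $\Phi$ from the left-hand side to the right-hand side. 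For the reverse map, functoriality of the universal graph product under induced-subgraph inclusions gives homomorphisms $\prod_{\Star(w)} A_v \to \prod_{\Gamma} A_v$ and $\prod_{\Gamma \setminus w} A_v \to \prod_{\Gamma} A_v$ (each arising from the universal property of the subgraph product, since the commutation relations of an induced subgraph are inherited from $\Gamma$); these two maps agree on the common subalgebra $\prod_{\Link(w)} A_v$, so the universal property of the amalgamated free product produces a homomorphism $\Psi$ in the opposite direction. Both $\Phi \circ \Psi$ and $\Psi \circ \Phi$ fix the generating copies of the $A_v$, and these generate, so both composites are the identity; all maps commute with the natural inclusions by construction.

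The reduced case is where the genuine work lies, since the reduced graph product carries no universal property. Here I would keep the algebraic isomorphism just produced — restricting $\Phi$ to the dense algebraic graph product yields a $*$-isomorphism onto the dense algebraic amalgamated free product — and promote it to the completions by comparing the canonical states. The reduced graph product $\prod_{\Gamma}(A_v,\vphi_v)$ carries a graph-product state together with a canonical conditional expectation onto $\prod_{\Link(w)}(A_v,\vphi_v)$, while the reduced amalgamated free product (formed with the conditional expectations onto $\prod_{\Link(w)}(A_v,\vphi_v)$ supplied by the graph-product structure on $\prod_{\Star(w)}(A_v,\vphi_v)$ and $\prod_{\Gamma\setminus w}(A_v,\vphi_v)$) carries its own canonical expectation onto the same amalgam, characterised by vanishing on alternating reduced words. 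The crux is to verify that the algebraic isomorphism intertwines these two conditional expectations; once this holds, faithfulness of the states forces the map to be isometric for the GNS norms and it extends to a $*$-isomorphism of the reduced completions.

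The main obstacle is precisely this last verification: one must reorganise a word of the reduced graph product, reduced with respect to the commutation relations and the states $\vphi_v$, into an alternating reduced word over the link-amalgam $\prod_{\Link(w)}(A_v,\vphi_v)$, and then check letter by letter that the graph-product expectation coincides with the amalgamated-free-product expectation. This bookkeeping — matching the two reduced-word decompositions and the two expectations — is the heart of the argument and is carried out in full by Caspers and Fima \cite{caspersfima17}; the sketch above is a reconstruction of their d\'evissage.
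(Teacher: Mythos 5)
The paper does not actually prove this statement: it is imported verbatim from \cite{caspersfima17} (Theorem 2.15 and Remark 2.16), so the only ``proof'' in the paper is the citation. Measured against that, your proposal is consistent with the paper's treatment and in fact does more. Your argument for the first bullet --- the edge-partition observation that every edge of $\Gamma$ lies entirely in $\Star(w)$ or entirely in $\Gamma \setminus w$, followed by playing the universal property of the graph product against the pushout property of the universal amalgamated free product, and checking that both composites fix the generating copies of the $A_v$ --- is complete and correct, and is essentially the content of Remark 2.16 of \cite{caspersfima17}. The one point you pass over silently is that in order to form $*_{\prod_{\Link(w)} A_v}$ at all, the canonical maps $\prod_{\Link(w)} A_v \to \prod_{\Star(w)} A_v$ and $\prod_{\Link(w)} A_v \to \prod_{\Gamma \setminus w} A_v$ must be injective unital inclusions; this is true, but it is itself a lemma about graph products over induced subgraphs that is established in \cite{caspersfima17}, not a formality.

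For the second bullet your text is a strategy outline rather than a proof: you correctly identify the crux --- that the algebraic isomorphism must intertwine the canonical conditional expectation of the reduced graph product onto $\prod_{\Link(w)}(A_v,\vphi_v)$ with the free-product expectation characterised by vanishing on alternating reduced words, after which faithfulness allows one to identify the two completions --- but you explicitly defer that reduced-word bookkeeping to \cite{caspersfima17}. Since the paper does exactly the same (it cites Theorem 2.15 rather than reproving it), this does not put you at odds with the paper; but as a standalone proof, your reduced case has a genuine hole at precisely the step you call the heart of the argument, so it should be read as a reconstruction of the cited proof rather than an independent one.
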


\subsection{Coxeter systems and Hecke algebras}
\label{sec:coxeter-systems}
A good introduction to Coxeter groups can be found for example in \cite{humphreys90}. Here we will consider only the finitely generated right-angled case.  A \emph{Coxeter matrix of right-angled type} over a non-empty finite set $S$ is a symmetric $S \times S$ matrix whose diagonal entries equal $1$  and whose off-diagonal entries lie in the set $ \{2,\infty\}$.  Let $M = (m_{st})_{s,t \in S}$ be a Coxeter matrix.  The \emph{Coxeter system} $(W, S)$ associated with $M$ is given by the group 
\begin{gather*}
W  = \langle S \mid \forall s,t \in S: \, s^2 = e \textup{ and } st = ts \textup{ whenever } m_{st}=2 \rangle
\end{gather*}
together with its generating set $S$. Given $w \in W$ we denote by $|w|$ its \emph{word length} with respect to the generating set $S$. The data contained in the Coxeter matrix can be equivalently encoded in the \emph{commutation graph} $\Gamma_{(W,S)}$, with the vertex set $S$ and the edge set $\{(s,t)\in S \times S: m_{st}=2\}$.  Note that the commutation graph is the complement of the standard (unlabelled) \emph{Coxeter graph} of $(W,S)$.  In \cite{caspersklisselarsen19} the commutation graph was called Coxeter diagram, which is unfortunately in conflict with the standard terminology employed in the theory of Coxeter groups.  It is easy to see, and was in fact the original inspiration to study graph products of groups, that $W$ is isomorphic to the graph product of groups of order $2$; the relevant isomorphism  $W \cong \prod_{\Gamma_{(W,S)}} \ZZ/2\ZZ$ preserves generating sets.

The Coxeter matrix and its Coxeter system are called \emph{irreducible} if the Coxeter matrix is not the Kronecker tensor  
product of two non-trivial Coxeter matrices.  Equivalently, there is no partition $S = S_1 \sqcup S_2$ such that $W = \langle S_1 \rangle \oplus \langle S_2 \rangle$.

Given $W$ as above  we will write $\Cstar(W)$ for the universal group \Cstar-algebra of $W$ and $\Cstar_r(W)$ for its reduced group \Cstar-algebra.  The central object of study in this paper is the family of unital \Cstar-algebras arising as deformations of $\Cstar_r(W)$, the so-called Hecke \Cstar-algebras of right-angled Coxeter groups. For the origins of the construction and its connections to geometry we refer to \cite{davis08} and \cite[Section 2.4]{raumskalski20}; many properties of the algebras in the class were recently established in \cite{caspersklisselarsen19}. Here we will take a direct way of introducing the Hecke operator algebras as generated by certain unitary representations of $W$.

Fix $(W,S)$ as above and a multiparameter $\qm \in (0,1]^S$. Consider $\lambda_{\qm}: W \to B(\ltwo(W))$, the unitary representation given by the formulas ($s\in S$, $w \in W$)
\begin{gather} \label{def_lambdaq}
  \lambda_\qm(s)\delta_w =
  \begin{cases}
    \phantom{-} \frac{1-q_s}{1+q_s} \delta_w + \frac{2 q^{1/2}}{1 + q_s} \delta_{sw} & \text{if } |sw| > |w|, \\
    -\frac{1-q_s}{1+q_s} \delta_w + \frac{2 q^{1/2}}{1 + q_s} \delta_{sw} & \text{if } |sw| < |w| \eqstop
  \end{cases}
\end{gather}
We then denote by  $\Cstar_\qm(W)$ the \Cstar-algebra generated by $\lambda_\qm(W)$. Note that for $\qm={\bf{1}}$ the representation $\lambda_\qm$ is the left regular representation, so that we have  $\Cstar_\textbf{1}(W)= \Cstar_r(W)$. The representations $\lambda_\qm$ can be also viewed as graph product representations, which was the point of view exploited in \cite{caspers20, caspersklisselarsen19} and systematically investigated in \cite[Section 4]{raumskalski20}. Each of them is equipped with a canonical trace $\tau:\Cstar_\qm(W) \to \CC$, given as the vector state associated with $\delta_e \in \ell^2(W)$. Finally if $q \in (0,1]$ we will write simply $\Cstar_q(W)$ for $\Cstar_\qm(W)$ with $\qm = (q,\ldots,q)\in (0,1]^S$.

The group isomorphism mentioned earlier and the general properties of the graph products of groups and operator algebras, \cite[Example 2.6]{caspersfima17} yield the natural isomorphism
\begin{gather*}
\Cstar(W) \cong \prod_{\Gamma_{(W,S)}} \Cstar(\ZZ/2\ZZ).
\end{gather*}
Further by \cite[Corollary 3.4]{caspers20}, see also \cite[Section 1.10]{caspersklisselarsen19}, if we view $\Cstar(\ZZ/2\ZZ) \cong \CC \chi_1 \oplus \CC \chi_{-1}$ via the Fourier transform
and  for each $s \in S$ we denote by $\vphi_s \in \cS(\Cstar(\ZZ/2\ZZ))$ the state given by
\begin{align*}
\vphi_s(\chi_1) & = \frac{1}{2} \langle (1 + \lambda_\qm(s)) \delta_e, \delta_e \rangle = \frac{1}{1 + q_s},\\
\vphi_s(\chi_{-1}) & = 1 - \vphi_s(\chi_1) = \frac{q_s}{1 + q_s},
\end{align*}
then we also have the trace preserving isomorphism
\begin{gather*}
\Cstar_\qm(W) \cong \prod_{\Gamma_{(W,S)}} (\Cstar(\ZZ/2\ZZ), \vphi_s),
\end{gather*}
extending the isomorphism of groups $W \to \prod_{\Gamma(W,S)} \ZZ/2\ZZ$.

\section{The reduction map is a KK-equivalence}
\label{sec:reduction-kk-equivalence}

In this section we will prove that the unitary representations $\lambda_\qm: W \to \Cstar_\qm(W)$ described in Section \ref{sec:coxeter-systems} induce a KK-equivalence of the Hecke-\Cstar-algebra with $\Cstar(W)$, in analogy with K-amenability results \cite{cuntz83-k-amenability}.  This implies that the K-theory of Hecke \Cstar-algebras associated to right-angled Coxeter groups does not depend on the deformation parameter.  In particular, it coincides with the K-theory of the reduced group \Cstar-algebra of the underlying group.  We will derive this result from a KK-equivalence between universal graph products and reduced graph products of unital \Cstar-algebras. 

Once again we refer to \cite[Section 2]{fimagermain15} for information on amalgamated free products of \Cstar-algebras.  In the next proposition we will use the approach to $\mathbf{KK}$-category via the language of triangulated categories, as developed by Meyer and Nest in \cite{meyernest06}.  We refer the reader to \cite{neeman01} for a treatment of triangulated categories.
\begin{proposition}
  \label{prob:amalgamated-free-products-preserve-kk-equivalence}
  For $i \in \{1,2\}$, let $A_i, B_i, C_i$ be unital separable \Cstar-algebras with unital inclusions with conditional expectations $\alpha_i: C_i \to A_i$ and $\beta_i: C_i \to B_i$.  If there is a commutative diagram of unital *-homomorphisms
  \begin{gather*}
    \xymatrix{
      A_1 \ar[d]^{\vphi_A} & \ar[l] C_1 \ar[r] \ar[d]^{\vphi_C}  & B_1 \ar[d]^{\vphi_B} \\
      A_2  & \ar[l] C_2 \ar[r] & B_2
    }
  \end{gather*}
  such that $\vphi_A, \vphi_B, \vphi_C$ are $\rK\rK$-equivalences, then the induced *-homomorphism $\vphi: A_1 *_{C_1} B_1 \to A_2 *_{C_2} B_2$ is a $\rK\rK$-equivalence too.
\end{proposition}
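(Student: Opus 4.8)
The plan is to exhibit each amalgamated free product as a vertex of a distinguished triangle in the triangulated category $\KK$ and then to conclude by the triangulated five lemma.

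First I would set up the triangles. Since the inclusions $\alpha_i, \beta_i$ admit conditional expectations, \cite[Theorem 4.1]{fimagermain15} applies to each $i \in \{1,2\}$ and shows that the suspension of $A_i *_{C_i} B_i$ is $\KK$-equivalent to the mapping cone $\cone(\alpha_i \oplus \beta_i)$ of the map $\alpha_i \oplus \beta_i \colon C_i \to A_i \oplus B_i$; this is exactly the statement already invoked in the proof of Proposition~\ref{thm:UCT-am}. Rotating the mapping cone triangle, this amounts to a distinguished triangle in $\KK$
\begin{gather*}
  C_i \xrightarrow{\alpha_i \oplus \beta_i} A_i \oplus B_i \xrightarrow{\iota_i} A_i *_{C_i} B_i \xrightarrow{\partial_i} \Sigma C_i \eqcomma
\end{gather*}
where $\iota_i$ is induced by the canonical inclusions of $A_i$ and $B_i$, $\partial_i$ is the connecting map, and $\Sigma$ denotes the suspension. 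Its associated six-term sequence is precisely the one recorded in Section~\ref{sec:k-theory-amalgamated-free-products}.

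Next I would assemble a morphism of triangles. The given commutative diagram provides a commuting square whose horizontal arrows are $\alpha_1 \oplus \beta_1$ and $\alpha_2 \oplus \beta_2$ and whose vertical arrows are $\vphi_C \colon C_1 \to C_2$ and $\vphi_A \oplus \vphi_B \colon A_1 \oplus B_1 \to A_2 \oplus B_2$. The crucial input, which I would extract from the construction underlying \cite[Theorem 4.1]{fimagermain15}, is that the triangle above is natural in the amalgamated free product data, so that the induced homomorphism $\vphi \colon A_1 *_{C_1} B_1 \to A_2 *_{C_2} B_2$ — which exists because the diagram is compatible with the conditional expectations — together with $\vphi_C$, $\vphi_A \oplus \vphi_B$ and $\Sigma \vphi_C$ forms a morphism of distinguished triangles. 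Here $\vphi_C$ is a $\KK$-equivalence by hypothesis, and $\vphi_A \oplus \vphi_B$ is one because finite direct sums are biproducts in the additive category $\KK$, so that $\vphi_A^{-1} \oplus \vphi_B^{-1}$ is a two-sided $\KK$-inverse. Since $\vphi_C$ and $\vphi_A \oplus \vphi_B$ are both $\KK$-equivalences, the triangulated five lemma forces the remaining vertical arrow, namely $\vphi$, to be a $\KK$-equivalence as well, which is the desired conclusion.

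The hard part, I expect, is the naturality invoked in the second step; the homological algebra itself is formal. The five lemma needs a genuine morphism of triangles, and although the axioms of a triangulated category always produce \emph{some} arrow completing a commuting square to such a morphism, this arrow is neither canonical nor a priori equal to the concretely induced homomorphism $\vphi$. One must therefore return to the construction in \cite{fimagermain15} and verify that its ingredients — the embedding $\iota_i$ and the boundary map $\partial_i$, produced there from an explicit extension or Kasparov module — are functorial with respect to morphisms of the amalgamated free product data, so that $\vphi$ does indeed serve as the middle leg of the morphism of triangles.
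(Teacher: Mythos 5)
Your strategy is the one the paper itself uses: invoke Fima--Germain's Theorem 4.1 to relate the suspension of the amalgam to the mapping cone of $\alpha_i \oplus \beta_i$, place everything in the triangulated Kasparov category, and finish with the triangulated five lemma (the paper cites \cite[Proposition 1.1.20]{neeman01} for this step). The difference lies in \emph{where} you apply the five lemma, and that choice is exactly what creates the gap you yourself flag at the end. You rotate so that $A_i *_{C_i} B_i$ appears as a vertex of the triangle $C_i \to A_i \oplus B_i \to A_i *_{C_i} B_i \to \rS C_i$, and then you need the concretely induced homomorphism $\vphi$ to serve as the middle leg of a morphism of distinguished triangles; this requires compatibility of $\vphi$ with the boundary classes $\partial_i$, which are KK-elements rather than *-homomorphisms, and that naturality is asserted but never established. (Even the preliminary claim that $\iota_i$, the map induced by the canonical inclusions, is the second arrow of a distinguished triangle already requires identifying the composite $\rS(A_i \oplus B_i) \to \cone(\alpha_i \oplus \beta_i) \simeq_{\mathrm{KK}} \rS(A_i *_{C_i} B_i)$ with $\rS \iota_i$, which is a further unverified step of the same kind.) So as written the proof is incomplete at precisely the point you call the hard part.

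The paper's proof is organized so that this question never arises, and you can close your argument by adopting its two-step structure. First, apply the five lemma one rotation earlier, to the morphism between the triangles $\rS C_i \to \rS(A_i \oplus B_i) \to \cone(\alpha_i \oplus \beta_i) \to C_i$ with legs $\rS\vphi_C$, $\rS(\vphi_A \oplus \vphi_B)$, $\vphi_{\cone}$ and $\vphi_C$: here every map is an honest *-homomorphism and the diagram commutes on the nose by functoriality of the mapping cone construction, so no naturality of connecting maps is needed, and Neeman's result gives that $\vphi_{\cone}$ is a KK-equivalence. Second, transport this to $\vphi$ through an equally concrete square: Fima--Germain's KK-equivalence is realized as the inclusion $D_i \hra \rS(A_i *_{C_i} B_i)$, where $D_i$ is the subalgebra of functions taking values in $A_i$ on $(-1,0)$, in $B_i$ on $(0,1)$, and in $C_i$ at $0$, and $D_i$ is *-isomorphic to $\cone(\alpha_i \oplus \beta_i)$. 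Since $\vphi$ restricts to $\vphi_A$, $\vphi_B$, $\vphi_C$ on the canonical copies of $A_1$, $B_1$, $C_1$, the map $\rS\vphi$ carries $D_1$ into $D_2$ and restricts there to $\vphi_{\cone}$; hence the square of *-homomorphisms commutes, three of its four arrows are KK-equivalences, and therefore so is $\rS\vphi$, hence $\vphi$. In short: your homological skeleton is right, but the naturality you would need to "extract from the construction" is avoidable altogether by running the five lemma at the level of cones and only then passing to the amalgam along the explicit subalgebra $D_i$.
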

\begin{proof}
  Throughout the proof we denote by $\rS$ the suspension functor for \Cstar-algebras, realised by tensoring with $\conto((-1,1))$. Recall that given a unital $^*$-homomorphism $\alpha:A \to B$ between two unital $\Cstar$-algebras, its cone is defined as
\[  \mathrm{cone} (\alpha) =  \{(f, a) \in \conto([0,1), B) \oplus A \mid f(0) = \alpha(a) \}.\]
We shall consider the Kasparov category $\mathbf{KK}$ whose objects are \Cstar-algebras, and whose Hom-spaces are KK-groups with the Kasparov product defining the composition.  Slightly extending the category of objects and adapting the morphisms leads to a triangulated category $\boldsymbol{\widetilde{KK}}$, as described in \cite[Section 2 and Appendix A]{meyernest06}.  The only aspect to take care of when trying to view $\mathbf{KK}$ as a triangulated category, which neccessitates the extension mentioned above, is that the suspension functor on $\mathbf{KK}$ is not an isomorphism, but merely an equivalence of categories.  As in \cite{meyernest06}, we will not make the formal distinction between $\boldsymbol{\widetilde{KK}}$ and $\mathbf{KK}$.
By \cite[Section 2.1]{meyernest06} the mapping cone of a *-homomorphism $\alpha: A \to B$ gives rise to a distinguished triangle
\begin{gather*}
\rS B \to \cone(\alpha) \to A \stackrel{\psi}{\to} B
\eqstop
\end{gather*}

  Let $i \in \{1,2\}$ and consider the \Cstar-subalgebra
  \begin{gather*}
    D_i
    =
    \{f \in \rS (A_i *_{C_i} B_i) \mid f((-1,0)) \subset A_i, f((0, 1)) \subset B_i, \text{ and } f(0) \in C_i \}
    \subset
    \rS(A_i *_{C_i} B_i)
    \eqcomma
  \end{gather*}
which is *-isomorphic with the mapping cone of $\alpha_i \oplus \beta_i: C_i \to A_i \oplus B_i$, via the map that takes $(f,c) \in \mathrm{cone}(\alpha_i \oplus \beta_i)$
  to the function
  \begin{gather*}
    t \mapsto
    \begin{cases}
      \pi_A(f(-t)) & \text{ if } t < 0 \\
      c & \text{ if } t = 0 \\
      \pi_B(f(t)) & \text{ if } t > 0
      \eqstop
    \end{cases}
  \end{gather*}
  Then \cite[Theorem 4.1]{fimagermain15} says that the inclusion $D_i \hra \rS(A_i *_{C_i} B_i)$ is a KK-equivalence.

 Applying the construction mentioned in the beginning of the proof to the *-homomorphism $\alpha_i \oplus \beta_i: C_i \to A_i \oplus B_i$, we obtain a distinguished triangle
  \begin{gather*}
    \rS(A_i \oplus B_i) \to \cone(\alpha_i \oplus \beta_i) \to C_i \to A_i \oplus B_i
    \eqstop
  \end{gather*}
  By the axiom TR2 of a triangulated category (see e.g. \cite[p.29f]{neeman01}) also the shifted triangle
  \begin{gather*}
    \rS C_i \to \rS(A_i \oplus B_i) \to \cone(\alpha_i \oplus \beta_i) \to C_i
  \end{gather*}
  is distinguished.  Denoting by $\vphi_{\mathrm{cone}}:\cone(\alpha_1 \oplus \beta_1) \to \cone(\alpha_2 \oplus \beta_2)$ the *-homomorphism induced by $\vphi_A \oplus \vphi_B$, we observe that the diagram
  \begin{gather*}
    \xymatrix{
      \rS C_1 \ar[r] \ar[d]^{\rS \vphi_C} & \rS(A_1 \oplus B_1) \ar[r] \ar[d]^{\rS(\vphi_A \oplus \vphi_B)} & \cone(\alpha_1 \oplus \beta_1) \ar[r] \ar[d]^{\vphi_{\cone}} & C_1 \ar[d]^{\vphi_C} \\
      \rS C_2 \ar[r] & \rS(A_2 \oplus B_2) \ar[r] & \cone(\alpha_2 \oplus \beta_2) \ar[r] & C_2
      }
  \end{gather*}
  commutes.  Since $\rS \vphi_C$ and $\rS(\vphi_A \oplus \vphi_B)$ are KK-equivalences, \cite[Proposition 1.1.20]{neeman01} says that $\vphi_{\mathrm{cone}}$ is a KK-equivalence too.  Consider now the commuting square
  \begin{gather*}
    \xymatrix{
      \cone(\alpha_1 \oplus \beta_1) \ar[d]^{\vphi_\mathrm{cone}}  \ar[r]^{\cong_{\mathbf{KK}}} & \rS(A_1 *_{\rC_1} B_1) \ar[d]^{\rS \vphi} \\
      \cone(\alpha_2 \oplus \beta_2) \ar[r]^{\cong_{\mathbf{KK}}} & \rS(A_2 *_{\rC_2} B_2)
      \eqstop
    }
  \end{gather*}
  Since three out of four morphisms are KK-equivalences, it follows that also $\rS \vphi$ is a KK-equivalence, and so is $\vphi$.
\end{proof}

We can now combine the last result with the d{\'e}vissage process for graph product \Cstar-algebras described in Theorem \ref{thm:devissage-graph-products} in order to establish a general theorem on the KK-equivalence of reduced and universal graph products of \Cstar-algebras.  We use Skandalis' notion of $\rK$-nuclearity \cite{skandalis88}.
\begin{theorem}
  \label{thm:reduction-graph-product-kk-equivalence}
  Let $\Gamma = (\rV,E)$ be a graph and let $(A_v, \vphi_v)_{v \in \rV}$ be a family of unital, separable and K-nuclear \Cstar-algebras with faithful tracial states.  Then the reduction map $\prod_{\Gamma} A_v \to \prod_{\Gamma} (A_v, \vphi_v)$ is a KK-equivalence. If moreover each of the algebras $A_v$ satisfies the UCT, so do the graph products $\prod_{\Gamma} A_v$ and $\prod_{\Gamma} (A_v, \vphi_v)$. 
\end{theorem}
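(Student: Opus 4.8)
The plan is to prove both assertions simultaneously by induction on the number $|\rV|$ of vertices, in each step using the d\'evissage of Theorem~\ref{thm:devissage-graph-products} to rewrite a graph product as an amalgamated free product and then feeding this into Proposition~\ref{prob:amalgamated-free-products-preserve-kk-equivalence} for the KK-equivalence and into Proposition~\ref{thm:UCT-am} for the UCT. The base case $|\rV| \le 1$ is immediate: the graph product is then $\CC$ or a single $A_v$, the reduction map is the identity and hence a KK-equivalence, and the algebra in question satisfies the UCT (for $A_v$ by the additional hypothesis).

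For the inductive step I fix a vertex $w \in \rV$ and apply Theorem~\ref{thm:devissage-graph-products} to the universal and to the reduced graph product with respect to the \emph{same} $w$. The reduction maps for the subgraphs then fit into a commutative diagram whose two rows are the decompositions of $\prod_\Gamma A_v$ and $\prod_\Gamma(A_v,\vphi_v)$ as amalgamated free products over $\prod_{\Link(w)}$, and whose three vertical arrows are the reduction maps over $\Star(w)$, $\Link(w)$ and $\Gamma \setminus w$. Since $\Link(w)$ and $\Gamma\setminus w$ have strictly fewer vertices, the induction hypothesis yields that the corresponding two reduction maps are KK-equivalences and that all algebras involved satisfy the UCT. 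Granting the same for the reduction map over $\Star(w)$, Proposition~\ref{prob:amalgamated-free-products-preserve-kk-equivalence} shows that the induced map on the amalgamated free products, which is exactly the reduction map for $\Gamma$, is a KK-equivalence, and Proposition~\ref{thm:UCT-am} gives the UCT for $\prod_\Gamma A_v$; since the UCT is a KK-invariant, the KK-equivalence just established transports it to $\prod_\Gamma(A_v,\vphi_v)$ as well. At this point I would also record that the inclusions appearing in the d\'evissage admit conditional expectations, as required by both propositions; these are the canonical conditional expectations of a graph product onto a sub-graph-product.

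The one piece not reachable by induction is $\Star(w)$, which coincides with $\Gamma$ precisely when $w$ is adjacent to every other vertex. As $w$ is joined to all of $\Link(w)$, the factor $A_w$ commutes with everything, so that $\prod_{\Star(w)} A_v \cong A_w \otimes_{\max} \prod_{\Link(w)} A_v$ in the universal case and $\prod_{\Star(w)} (A_v,\vphi_v) \cong A_w \otimes_{\min} \prod_{\Link(w)} (A_v,\vphi_v)$ in the reduced case, compatibly with the reduction maps. I then factor the reduction map over $\Star(w)$ as
\[
A_w \otimes_{\max} \prod_{\Link(w)} A_v \xrightarrow{\ q\ } A_w \otimes_{\min} \prod_{\Link(w)} A_v \xrightarrow{\ \id_{A_w} \otimes_{\min} \pi\ } A_w \otimes_{\min} \prod_{\Link(w)} (A_v,\vphi_v),
\]
where $q$ is the canonical max-to-min quotient and $\pi$ is the reduction map over the strictly smaller graph $\Link(w)$, a KK-equivalence by the induction hypothesis. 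K-nuclearity of $A_w$ makes $q$ a KK-equivalence by \cite{skandalis88}, while functoriality of Kasparov's external product $\tau_{A_w}$ shows that tensoring $\pi$ minimally by the fixed algebra $A_w$ is a KK-equivalence; hence the reduction map over $\Star(w)$ is a KK-equivalence. For the UCT, the map $q$ identifies $\prod_{\Star(w)} A_v$ up to KK-equivalence with $A_w \otimes_{\min} \prod_{\Link(w)} A_v$, which satisfies the UCT by the characterisation of the UCT class as those separable \Cstar-algebras that are KK-equivalent to an abelian one, applied to the UCT algebras $A_w$ and $\prod_{\Link(w)} A_v$.

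I expect this $\Star(w)$ step to be the main obstacle: one must check that the graph-product-to-tensor-product identifications are genuinely compatible with the reduction maps and with the conditional expectations feeding the two propositions, and the comparison of the maximal and minimal tensor norms at the level of KK-theory relies crucially on the K-nuclearity hypothesis on the vertex algebras. The remaining work is the routine bookkeeping of assembling the inductive decomposition.
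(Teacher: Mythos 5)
Your overall skeleton --- induction on $|\rV|$, d\'evissage at a vertex $w$ via Theorem~\ref{thm:devissage-graph-products}, Proposition~\ref{prob:amalgamated-free-products-preserve-kk-equivalence} in the generic case, and a tensor-product argument using K-nuclearity of $A_w$ when $\Star(w)=\Gamma$ --- matches the paper's proof, and your treatment of the join case (Skandalis for the max-to-min map, Kasparov's $\tau_{A_w}$ operation applied to the reduction map over $\Link(w)$) is correct; the paper merely organizes this differently, taking all complete graphs as the base case and always choosing $w$ with $\Star(w)\neq\Gamma$. However, your generic inductive step has a genuine gap. Theorem~\ref{thm:devissage-graph-products} identifies $\prod_\Gamma A_v$ with a \emph{maximal} amalgamated free product, but $\prod_\Gamma (A_v,\vphi_v)$ with a \emph{reduced} one. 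Proposition~\ref{prob:amalgamated-free-products-preserve-kk-equivalence} produces a KK-equivalence between amalgamated free products of the \emph{same} type: from your diagram it controls the induced map from $\left(\prod_{\Star(w)} A_v\right) *_{\mathrm{max},\, \prod_{\Link(w)} A_v} \left(\prod_{\Gamma\setminus w} A_v\right)$ to the \emph{maximal} amalgamated free product of the three reduced graph products, not to their reduced amalgamated free product. The map you describe as ``exactly the reduction map for $\Gamma$'' is a maximal-to-reduced map, and the proposition says nothing about it.

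That this is not a cosmetic point is clearest for $\Gamma$ consisting of two vertices and no edge: there all three vertical reduction maps in your diagram are isomorphisms, and your argument would conclude from Proposition~\ref{prob:amalgamated-free-products-preserve-kk-equivalence} alone that the canonical quotient from the full free product $A_1 * A_2$ onto the reduced free product is a KK-equivalence --- which is precisely the deep theorem of Germain and Fima--Germain, not a formal consequence of functoriality. The paper closes exactly this gap by factoring the reduction map through the \emph{maximal} amalgamated free product of the reduced graph products: the first factor $\alpha$ (maximal-to-maximal) is a KK-equivalence by Proposition~\ref{prob:amalgamated-free-products-preserve-kk-equivalence} and induction, while the second factor $\beta$ (maximal-to-reduced, for the same triple of algebras, amalgamated via the trace-preserving conditional expectations) is a KK-equivalence by \cite[Corollary 4.12]{fimagermain15}. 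This second factor is the essential analytic input of the whole theorem, and it is exactly the ingredient missing from your argument; once it is inserted at this point (and the corresponding transport of the UCT to $\prod_\Gamma(A_v,\vphi_v)$ is rerouted through it), the rest of your proof goes through.
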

\begin{proof}
  We prove the first part of the theorem by induction on the number of vertices $|\rV|$.  When $\rV$ is a complete graph, and in particular in the trivial base case of $|V| = 1$,  the statement follows from the K-nuclearity assumption. Indeed, in this case the reduced graph product is just the minimal tensor product of \mbox{\Cstar-algebras}, and the universal graph product is the corresponding maximal tensor product. The reduction map is then a KK-equivalence by the inductive application of  \cite[Proposition 3.5 (a)]{skandalis88}. 
  
  Fix then $\Gamma = (\rV, \rE)$ and $(A_v, \vphi_v)_{v \in \rV}$ as in the statement of the theorem and assume that the theorem is established for all graphs whose number of vertices is strictly lower than that of $\rV$.  By our initial remark, we may assume that $\Gamma$ is not a complete graph, so that we can choose $w \in V$ such that $\Star(w)$ has fewer vertices than $\rV$.  By Theorem \ref{thm:devissage-graph-products}, there are natural isomorphisms
  \begin{align*}
    \prod_{\Gamma} A_v
    & \cong
      \left (\prod_{\Star(w)} A_v \right ) *_{\mathrm{max}, \prod_{\Link(w)} A_v} \left ( \prod_{\Gamma \setminus w} A_v \right ) \\
    \prod_{\Gamma} (A_v, \vphi_v)
    & \cong
      \left (\prod_{\Star(w)} (A_v,\vphi_v) \right ) *_{\mathrm{red}, \prod_{\Link(w)} (A_v, \vphi_v)} \left ( \prod_{\Gamma \setminus w} (A_v, \vphi_v) \right )
    \eqstop
  \end{align*}
  We distinguish explicitly between maximal and reduced amalgamated free product, since we will also apply the former version to \Cstar-algebras with a prescribed state.  So we obtain a description of the reduction map $\prod_{\Gamma} A_v \to \prod_{\Gamma} (A_v, \vphi_v)$ by the following diagram.
  \begin{gather*}
    \xymatrix{
      \prod_{\Gamma} A_v \ar[dd] \ar[r]^-{\cong} & \prod_{\Star(w)} A_v *_{\mathrm{max}, \prod_{\Link(w)} A_v} \prod_{\Gamma \setminus w} A_v \ar[d]^\alpha \\
      & \left (\prod_{\Star(w)} (A_v, \vphi_v) \right)  *_{\mathrm{max}, \prod_{\Link(w)} (A_v, \vphi_v)} \left( \prod_{\Gamma \setminus w} (A_v, \vphi_v) \right ) \ar[d]^\beta\\
      \prod_{\Gamma} (A_v, \vphi_v) \ar[r]^-{\cong} & \left (\prod_{\Star(w)} (A_v, \vphi_v) \right)  *_{\mathrm{red}, \prod_{\Link(w)} (A_v, \vphi_v)} \left( \prod_{\Gamma \setminus w} (A_v, \vphi_v) \right )
      }
    \end{gather*}
    We now use the fact that the traces on $A_v$ provide conditional expectations allowing us to apply Proposition~\ref{prob:amalgamated-free-products-preserve-kk-equivalence} and results from \cite{fimagermain15}. The map $\beta$ is a KK-equivalence by \cite[Corollary 4.12]{fimagermain15}.  The map $\alpha$ is a KK-equivalence by Proposition~\ref{prob:amalgamated-free-products-preserve-kk-equivalence} and the $\rK$-nuclearity assumption on $A_w$.  So also the reduction map of the graph product is a KK-equivalence.
    
    The second statement can be shown along the same lines, using  Proposition \ref{thm:UCT-am} and the fact -- observed by Skandalis in \cite[Section 5]{skandalis88} -- that minimal (as well as maximal) tensor products of \Cstar-algebras satisfying the UCT also satisfy the UCT. Note also that a \Cstar-algebra satisfying the UCT is neccessarily K-nuclear.
\end{proof}

For the formulation of the next corollary, recall the representation $\lambda_\qm$ of a right-angled Coxeter group $W$ defined in \eqref{def_lambdaq} and denote its canonical extension to a unital $^*$-homomorphism from $\Cstar(W)$ to $\Cstar_\qm(W)$ by the same symbol.
\begin{corollary}
  \label{thm:reduction-racg-kk-equivalence}
  Let $(W,S)$ be a right-angled Coxeter system and $\qm \in \RR_{>0}^S$.  Then $\lambda_\qm: \Cstar(W) \to \Cstar_\qm(W)$ is a KK-equivalence. Moreover the algebra $\Cstar_\qm(W)$ satisfies the UCT.
\end{corollary}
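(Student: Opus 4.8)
The plan is to deduce the corollary directly from Theorem~\ref{thm:reduction-graph-product-kk-equivalence}, applied to the commutation graph $\Gamma_{(W,S)}$ with a copy of $\Cstar(\ZZ/2\ZZ)$ placed at each vertex $s \in S$ and equipped with the state $\vphi_s$ recorded in Section~\ref{sec:coxeter-systems}. First I would recall the two identifications established there: the group isomorphism $W \cong \prod_{\Gamma_{(W,S)}} \ZZ/2\ZZ$ yields $\Cstar(W) \cong \prod_{\Gamma_{(W,S)}} \Cstar(\ZZ/2\ZZ)$ onto the universal graph product, while \cite[Corollary 3.4]{caspers20} gives the trace-preserving isomorphism $\Cstar_\qm(W) \cong \prod_{\Gamma_{(W,S)}} (\Cstar(\ZZ/2\ZZ), \vphi_s)$ onto the reduced graph product. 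Under these identifications the representation $\lambda_\qm$ is precisely the reduction map from the universal to the reduced graph product, so that the first assertion becomes an instance of the first part of Theorem~\ref{thm:reduction-graph-product-kk-equivalence}.

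To invoke that theorem I would verify its hypotheses for the vertex algebras. Each copy of $\Cstar(\ZZ/2\ZZ) \cong \CC \oplus \CC$ is unital, separable and finite-dimensional, hence nuclear and satisfying the UCT, so in particular $\rK$-nuclear (using the remark at the end of the proof of Theorem~\ref{thm:reduction-graph-product-kk-equivalence} that the UCT implies $\rK$-nuclearity). The state $\vphi_s$ is tracial automatically, since $\Cstar(\ZZ/2\ZZ)$ is abelian, and it is faithful because the two values $\frac{1}{1+q_s}$ and $\frac{q_s}{1+q_s}$ are both strictly positive whenever $q_s \in (0,\infty)$, which is exactly the range covered by $\qm \in \RR_{>0}^S$. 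With the hypotheses in place, the first part of Theorem~\ref{thm:reduction-graph-product-kk-equivalence} shows the reduction map $\lambda_\qm$ is a KK-equivalence, and the second part shows that both graph products, and in particular $\Cstar_\qm(W)$, satisfy the UCT.

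I expect no serious obstacle here: the substantive work has already been carried out in Theorem~\ref{thm:reduction-graph-product-kk-equivalence}, and the corollary is essentially a translation, through the graph product picture, of that general statement into the language of Hecke \Cstar-algebras. The only point requiring a little care is the identification of $\lambda_\qm$ with the abstract reduction map of graph products; this is exactly the content of the trace-preserving isomorphism quoted above, so I would cite it explicitly rather than re-derive the matching of generators by hand. A secondary point worth flagging is that $\vphi_s$ depends on $q_s$, so one applies the theorem for each fixed $\qm$ separately; it is precisely the faithfulness of $\vphi_s$ throughout the parameter range $\RR_{>0}^S$ that guarantees the reduced graph product is well defined and that the hypotheses hold across the whole range.
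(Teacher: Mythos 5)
Your proposal is correct and follows essentially the same route as the paper: both pass through the identifications $\Cstar(W) \cong \prod_{\Gamma_{(W,S)}} \Cstar(\ZZ/2\ZZ)$ and $\Cstar_\qm(W) \cong \prod_{\Gamma_{(W,S)}} (\Cstar(\ZZ/2\ZZ), \vphi_s)$, identify $\lambda_\qm$ with the reduction map of graph products, and then invoke both parts of Theorem~\ref{thm:reduction-graph-product-kk-equivalence}. Your extra care in checking the hypotheses (faithfulness of $\vphi_s$ for all $q_s > 0$, traciality from commutativity, and K-nuclearity of the finite-dimensional vertex algebras) is a welcome elaboration of details the paper leaves implicit, but it does not change the argument.
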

\begin{proof}
  The identifications introduced in Subsection \ref{sec:coxeter-systems}  imply that we obtain a commutative diagram
  \begin{gather*}
    \xymatrix{
      \Cstar(W) \ar[r]^-{\cong} \ar[d] & \prod_{\Gamma_{(W,S)}} \Cstar(\ZZ / 2\ZZ) \ar[d] \\
      \Cstar_q(W) \ar[r]^-{\cong} & \prod_{\Gamma_{(W,S)}} (\Cstar(\ZZ/2\ZZ), \vphi_s) \eqstop
      }
    \end{gather*}
    By Theorem \ref{thm:reduction-graph-product-kk-equivalence}, the right vertical morphism is a KK-equivalence.  So also the left vertical morphism is a KK-equivalence.
    
    The last statement follows from the above identification and the second part of Theorem \ref{thm:reduction-graph-product-kk-equivalence}. 
  \end{proof}

One could alternatively deduce the UCT for $\Cstar(W)$ from the results of \cite{tu99}, as by \cite{bozejkojanuszkiewiczspatzier88} all Coxeter groups have the Haagerup property.

\section{Computing K-theoretic invariants of Hecke \Cstar-algebras}
\label{sec:k-theory-computation}

In this section we first provide an alternative (to that of \cite{sanchezgarcia07}) method of computing the K-theory for the full group \Cstar-algebra of a right-angled Coxeter system $(W,S)$ , allowing us to describe explicit generators in terms of the  commutation graph of $(W,S)$.  Combined with the KK-equivalence established in the previous section, this also provides a very concrete description of the K-theory of the corresponding Hecke \Cstar-algebra.  We then exploit the explicit nature of our calculations in order to describe the trace pairing.

In order to formulate our $\rK$-theory calculations of universal group \Cstar-algebras, let us introduce the following notation.

\begin{notation}
  \label{not:clique-projections}
  Let $(W,S)$ be a right-angled Coxeter system with commutation graph $\Gamma$.  Recall from the last part of Section \ref{sec:coxeter-systems} the minimal projection $\chi_1 \in \Cstar(\ZZ/2\ZZ)$ associated with the non-trivial irreducible representation of $\ZZ/2\ZZ$.   We associate with each non-empty clique $C$ of $\Gamma$ the projection
  \begin{gather*}
    p_C = \ot_{s \in C} \chi_1^{(s)} \in \prod_\Gamma \Cstar(\ZZ/2\ZZ) \cong \Cstar(W)
    \eqcomma
  \end{gather*}
  where by $\chi_1^{(s)}$ we denote the minimal projection in the copy of $\Cstar(\ZZ/2\ZZ)$ generated by $s \in C \subset S$, and set $p_\emptyset = 1_{\Cstar(W)}$.  
\end{notation}

\begin{theorem}
  \label{thm:k-theory-computation}
  Let $(W,S)$ be a right-angled Coxeter system.  Denote by $\Gamma$ the associated commutation graph and by $\cC$ the set of all cliques of $\Gamma$ (including the empty clique).  Then the following statements about $\rK$-theory of the universal group \Cstar-algebra hold true.
  \begin{itemize}
  \item The map $C \mapsto [p_C]$ induces an isomorphism $\ZZ^\cC \cong \rK_0(\Cstar(W))$.
  \item $\rK_1(\Cstar(W)) = 0$.
  \end{itemize}
  In particular, $\rK_0(\Cstar(W))$ is a free abelian group whose rank is the numbers of cliques in the commutation graph of $(W,S)$ and it is generated by the projections from the copies $\Cstar(\ZZ/2\ZZ) \subset \Cstar(W)$ associated with the Coxeter generators $s \in S$.
\end{theorem}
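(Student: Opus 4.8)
The plan is to induct on the number of vertices $|\rV|$ of the commutation graph $\Gamma$, exploiting the identification $\Cstar(W) \cong \prod_{\Gamma} \Cstar(\ZZ/2\ZZ)$ together with the d\'evissage isomorphism of Theorem \ref{thm:devissage-graph-products} and the six-term exact sequence of Fima and Germain \cite[Corollary 4.12]{fimagermain15}. The base case consists of the complete graphs (which subsumes the one-vertex and the empty graph), since these are precisely the graphs for which the d\'evissage cannot lower the vertex count. For a complete graph on $S$ the group $W$ is the abelian group $(\ZZ/2\ZZ)^{S}$ and $\Cstar(W)$ is the (maximal, hence minimal) tensor product $\ot_{s \in S} \Cstar(\ZZ/2\ZZ) \cong \CC^{2^{|S|}}$, so that $\rK_1 = 0$ and $\rK_0 \cong \ZZ^{2^{|S|}}$. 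Here every subset of $S$ is a clique, so $|\cC| = 2^{|S|}$, and I would check that $\{[p_C]\}_{C \in \cC}$ is a basis by observing that, expressed in terms of the minimal projections $\ot_{s} \chi_{\epsilon_s}^{(s)}$, the transition matrix is the tensor product over $s \in S$ of the unimodular matrix $\left(\begin{smallmatrix} 1 & 0 \\ 1 & 1 \end{smallmatrix}\right)$, and is therefore itself unimodular.

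For the inductive step I assume $\Gamma$ is not complete and pick a vertex $w$ that fails to be adjacent to some other vertex, so that $\Star(w)$, $\Link(w)$ and $\Gamma \setminus w$ all have strictly fewer vertices than $\Gamma$. Theorem \ref{thm:devissage-graph-products} then presents $\Cstar(W)$ as the amalgamated free product of $A_1 = \prod_{\Star(w)} \Cstar(\ZZ/2\ZZ)$ and $A_2 = \prod_{\Gamma \setminus w} \Cstar(\ZZ/2\ZZ)$ over $B = \prod_{\Link(w)} \Cstar(\ZZ/2\ZZ)$, with the canonical trace furnishing the conditional expectations needed to invoke \cite[Corollary 4.12]{fimagermain15}. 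By the inductive hypothesis $\rK_1(A_1) = \rK_1(A_2) = \rK_1(B) = 0$, so the six-term sequence collapses to
\begin{gather*}
0 \to \rK_1(\Cstar(W)) \to \rK_0(B) \stackrel{\iota}{\lra} \rK_0(A_1) \oplus \rK_0(A_2) \to \rK_0(\Cstar(W)) \to 0
\eqstop
\end{gather*}
The amalgam inclusions $B \hra A_1$ and $B \hra A_2$ are the natural ones, and since a clique of $\Link(w)$ is simultaneously a clique of $\Star(w)$ and of $\Gamma \setminus w$ with $p_C$ mapping to $p_C$, the map $\iota$ sends the basis class $[p_C]$ of $\rK_0(B)$ (free on the cliques of $\Link(w)$, by induction) to $([p_C], \pm[p_C])$.

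Because its first component already carries the basis of $\rK_0(B)$ injectively onto part of the basis of $\rK_0(A_1)$, the map $\iota$ is split injective; hence $\rK_1(\Cstar(W)) = 0$ and $\rK_0(\Cstar(W)) \cong \coker(\iota)$ is free. Its rank is $N(\Star(w)) + N(\Gamma \setminus w) - N(\Link(w))$, which I claim equals $N(\Gamma)$: applying Lemma \ref{lem:counting-cliques} to $\Star(w)$ at $w$ gives $N(\Star(w)) = 2 N(\Link(w))$ (both the link and the vertex-deletion inside $\Star(w)$ equal $\Link(w)$), and then Lemma \ref{lem:counting-cliques} applied to $\Gamma$ at $w$ yields $N(\Link(w)) + N(\Gamma \setminus w) = N(\Gamma)$. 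Finally, the surjection $\rK_0(A_1) \oplus \rK_0(A_2) \to \rK_0(\Cstar(W))$ carries the basis classes $[p_C]$ ($C$ a clique of $\Star(w)$) and $[p_D]$ ($D$ a clique of $\Gamma \setminus w$) to the corresponding classes of $\Cstar(W)$, and since every clique of $\Gamma$ either contains $w$ (and so lies in $\Star(w)$) or avoids $w$ (and so lies in $\Gamma \setminus w$), these images are exactly $\{[p_C] : C \in \cC\}$. Thus $\{[p_C]\}_{C \in \cC}$ is a generating set of cardinality $N(\Gamma) = |\cC|$ for a free abelian group of rank $N(\Gamma)$, hence a basis, giving the isomorphism $\ZZ^{\cC} \cong \rK_0(\Cstar(W))$.

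I expect the main obstacle to lie in the bookkeeping that keeps all the induced maps \emph{monomial} in the clique bases: one must ensure both that the six-term sequence is available for the \emph{universal} (maximal) amalgamated free product arising from the d\'evissage, and that $\iota$ is genuinely split injective so that the cokernel remains free and the na\"ive rank count suffices to upgrade a generating set to a basis. Both points rest on the observation that the amalgam inclusions send distinguished clique projections to clique projections, so that every map appearing in the induction is induced by an inclusion of clique sets.
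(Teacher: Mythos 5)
Your proof is correct and follows essentially the same route as the paper's: induction on the number of vertices with the abelian (complete-graph) base case, d\'evissage into an amalgamated free product at a vertex $w$ with $\Star(w) \neq \Gamma$, the Fima--Germain six-term sequence collapsing to a four-term exact sequence, split injectivity of $\rK_0(\prod_{\Link(w)}) \to \rK_0(\prod_{\Star(w)})$, and the clique-counting Lemma \ref{lem:counting-cliques} to match the rank with $|\cC|$ so that the generating family $(p_C)_{C \in \cC}$ is forced to be a basis. The only cosmetic differences are that the paper obtains the split injectivity from the algebra-level retraction coming from $\prod_{\Star(w)} \Cstar(\ZZ/2\ZZ) \cong \Cstar(\ZZ/2\ZZ) \otimes \prod_{\Link(w)} \Cstar(\ZZ/2\ZZ)$ rather than from the inductive basis description, and that your explicit unimodular-matrix verification in the base case is carried out where the paper simply invokes a direct computation.
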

\begin{proof}
  We will make use of the natural identification of $\Cstar(W)$ with the graph product $ \prod_{\Gamma} \Cstar(\ZZ/2\ZZ)$ described in the last part of Section \ref{sec:coxeter-systems}.  In particular, this identification preserves the inclusion of the copies of $\Cstar(\ZZ/2\ZZ)$ associated with the Coxeter generators in $S$.

  Let us first observe that the statement of the theorem follows from direct K-theory computations if $W$ is abelian, and in particular if $W$ has rank 1.   Let $(W, S)$ be non-abelian of rank at least 2 and assume that the result is proven for all right-angled Coxeter groups whose rank is strictly lower than the rank of $W$.

  We first show that $\rK_1(\Cstar(W)) = 0$.  Since $W$ is not abelian, there is $s \in S$ such that $\Star(s) \neq \Gamma$.  Then we have an amalgamated free product decomposition
  \begin{gather*}
    \prod_{\Gamma} \Cstar(\ZZ/2\ZZ) \to \left (\prod_{\Star(s)} \Cstar(\ZZ/2\ZZ) \right ) *_{\prod_{\Link(s)} \Cstar(\ZZ/2\ZZ)} \left ( \prod_{\Gamma \setminus s} \Cstar(\ZZ/2\ZZ) \right )
    \eqcomma
  \end{gather*}
  by Theorem \ref{thm:devissage-graph-products}.  Note that the isomorphism $\prod_{\Star(s)} \Cstar(\ZZ/2\ZZ)  \cong \Cstar(\ZZ/2\ZZ) \otimes \prod_{\Link(s)} \Cstar(\ZZ/2\ZZ)$ preserves the natural inclusions of $\prod_{\Link(s)} \Cstar(\ZZ/2\ZZ)$.  This implies that the inclusion $\prod_{\Link(s)} \Cstar(\ZZ/2\ZZ) \to \prod_{\Star(s)} \Cstar(\ZZ/2\ZZ)$ has a right inverse and thus induces an injection in K-theory.  Using the 6-term exact sequence for amalgamated free products from \cite[Corollary 4.12]{fimagermain15}, which was recalled in Section~\ref{sec:k-theory-amalgamated-free-products}, we obtain the following 6-term exact sequence.
  \begin{gather*}
    \xymatrix{
      \rK_0(\prod_{\Link(s)} \Cstar(\ZZ/2\ZZ)) \ar[r] &
      \pbox{12em}{
        $\rK_0(\prod_{\Star(s)} \Cstar(\ZZ/2\ZZ)) \oplus$ \\ $\phantom{m} \rK_0(\prod_{\Gamma \setminus s} \Cstar(\ZZ/2\ZZ))$
      }
      \ar[r]  &
      \rK_0(\prod_{\Gamma} \Cstar(\ZZ/2\ZZ)) \ar[d] \\
      \rK_1(\prod_{\Gamma} \Cstar(\ZZ/2\ZZ)) \ar[u] &
      \pbox{12em}{
        $\rK_1(\prod_{\Star(s)} \Cstar(\ZZ/2\ZZ)) \oplus$ \\ $\phantom{m} \rK_1(\prod_{\Gamma \setminus s} \Cstar(\ZZ/2\ZZ))$
      }
      \ar[l] &
      \rK_1(\prod_{\Link(s)} \Cstar(\ZZ/2\ZZ)) \ar[l]
    }
  \end{gather*}
  Applying the induction hypothesis, we obtain the exact sequence
  \begin{gather*}
    \xymatrix{
      0 \ar[r] &
      \rK_1(\prod_{\Gamma} \Cstar(\ZZ/2\ZZ)) \ar[r] &
      \rK_0(\prod_{\Link(s)} \Cstar(\ZZ/2\ZZ))  \ar@{->} `r[d] `[l] `^d[ll] `^r[dll] [d]\\
      && \rK_0(\prod_{\Star(s)} \Cstar(\ZZ/2\ZZ) ) \oplus \rK_0(\prod_{\Gamma \setminus s} \Cstar(\ZZ/2\ZZ))
    }
  \end{gather*}
  Since the map $\rK_0(\prod_{\Link(s)} \Cstar(\ZZ/2\ZZ)) \to \rK_0(\prod_{\Star(s)} \Cstar(\ZZ/2\ZZ) )$ is injective, it follows that $\rK_1(\Cstar(W)) \cong \rK_1(\prod_{\Gamma} \Cstar(\ZZ/2\ZZ)) = 0$.

  To compute $\rK_0(\Cstar(W))$ we first notice that the short exact sequence
  \begin{gather*}
    \xymatrix{
      0 \ar[r] & \rK_0(\prod_{\Link(s)} \CC^2) \ar[r] & \rK_0(\CC^2 \ot \prod_{\Link(s)} \CC^2 ) \oplus \rK_0(\prod_{\Gamma \setminus s} \CC^2) \ar[r] & \rK_0(\prod_\Gamma \CC^2) \ar[r] & 0
    }
  \end{gather*}
  combined with the induction hypothesis shows that $\rK_0(\Cstar(W)) \cong \rK_0(\prod_\Gamma \CC^2)$ is free abelian and generated by the projections coming from the inclusions $\Cstar(\langle s \rangle) \subset \Cstar(W)$ for $s \in S$.  In order to calculate the rank of $\rK_0(\Cstar(W))$, let us write $\rK_0(\Lambda) = \rK_0(\prod_\Lambda \CC^2)$ for a finite graph $\Lambda$.  Then $\Lambda \mapsto \rK_0(\Lambda)$ is a functor from the category of finite graphs, with morphisms given by induced inclusions, to the category of abelian torsion free groups.  It maps the empty graph to $\ZZ$ and satisfies the following rules:
  \begin{itemize}
  \item if $v \in \rV(\Lambda)$ is such that $\Star(v) = \Lambda$ then the following diagram commutes.
    \begin{gather*}
      \xymatrix{
        \rK_0(\Lambda \setminus v) \ar[dr]^{a \mapsto (1,-1) \otimes a} \ar[r] & \rK_0(\Lambda) \ar[d]^\cong \\
        & \bZ^2 \otimes \rK_0(\Lambda \setminus v)
      }
    \end{gather*}
  \item if $v \in \rV(\Lambda)$ satisfies $\Star(v) \neq \Lambda$ then we have a short exact sequence
    \begin{gather*}  \xymatrix{
        0 \ar[r] & \rK_0(\Link(v)) \ar[r] & \rK_0(\Star(v)) \oplus \rK_0(\Lambda \setminus v)  \ar[r] & \rK_0(\Lambda) \ar[r]& 0}
    \end{gather*}	
  \end{itemize}
  It follows that the rank $N(\Lambda)$ of $\rK_0(\Lambda)$ evaluated on the graph with one vertex is $N(K_1) = 2$.  Further,
  \begin{gather*}
    N(\Lambda) = N(\Link(v)) + N(\Lambda \setminus v)
  \end{gather*}
  for all $v \in \rV(\Lambda)$.  By Lemma \ref{lem:counting-cliques} and a straightforward induction we thus find that $N(\Lambda)$ is the number of all cliques in $\Lambda$, including the empty clique.  We next show that $(p_C)_{C \in \cC(\Lambda)}$ is a basis for $\rK_0(\Lambda)$, where $\cC(\Lambda)$ denotes the set of cliques of $\Lambda$.  Since $|\cC(\Lambda)|$ equals the rank of $\rK_0(\Lambda)$, it suffices to show that $(p_C)_{C \in \cC(\Lambda)}$ is generating.  This follows by induction, because choosing $v \in \Lambda$ such that $\Star(v) \neq \Lambda$, we saw that $\rK_0(\Lambda)$ is generated by $\rK_0(\Star(v)) \oplus \rK_0(\Lambda \setminus v)$.  Since every clique of $\Lambda$ is either contained in $\Star(v)$ or in $\Lambda \setminus v$, we can conclude the proof.
\end{proof}

\begin{remark}
As mentioned in the introduction, the abstract form of $\rK$-theory groups for (reduced) \Cstar-algebras of even, so in particular also right-angled Coxeter groups has been computed in \cite{sanchezgarcia07}. The proof there is based on first computing the Bredon homology of the Coxeter group in question, noting that it coincides with the equivariant $\rK$-homology, and then appealing to the main result of \cite{higsonkasparov97} which says that the Baum-Connes assembly map is an isomorphism for the groups which have the Haagerup property. The latter property holds for Coxeter groups due to \cite{bozejkojanuszkiewiczspatzier88}. The form of the $\rK_0$-group given by S\'anchez-Garcia in the right-angled case is expressed in terms of the number of spherical subsets of  $S$, which is easily seen to be equal to the number of cliques of the commutaion graph of $(W,S)$.
It should be noted that the techniques of \cite[Section 9]{sanchezgarcia07} in principle make it possible to compute, with the computer assistence,  the $\rK$-theory groups for $\Cstar$-algebras of arbitrary Coxeter groups. It is however highly non-clear how these could be applied to the associated Hecke \Cstar-algebra beyond the right-angled case. The methods used by Solleveld in \cite{solleveld18} in the affine case are of a completely different nature to these of our work, and use rather the explicit representation theory of the deformed algebras, as investigated in \cite{solleveld12}.	
\end{remark}

\begin{example}
  \label{ex:identical-k-theory}
  It is natural to ask whether $\rK$-theory of group \Cstar-algebras distinguishes the Coxeter type of a right-angled Coxeter group.  This is not the case, as demonstrated by the two examples associated with the following commutation graphs.
  \begin{center}
    \begin{minipage}{0.6\linewidth}
      \begin{tikzpicture}
        \node[inner sep=2pt,circle, fill] (A) at (0,0) {};
        \node[inner sep=2pt,circle, fill] (B) at (1,0) {};
        \node[inner sep=2pt,circle, fill] (C) at (2,0) {};
        \node[inner sep=2pt,circle, fill] (D) at (3,0) {};

        \draw (A) -- (B);
        \draw (B) -- (C);
      \end{tikzpicture}
      \hfill
      \begin{tikzpicture}
        \node[inner sep=2pt,circle, fill] (A) at (0,0) {};
        \node[inner sep=2pt,circle, fill] (B) at (1,0) {};
        \node[inner sep=2pt,circle, fill] (C) at (2,0) {};
        \node[inner sep=2pt,circle, fill] (D) at (3,0) {};

        \draw (A) -- (B);
        \draw (C) -- (D);
      \end{tikzpicture}
    \end{minipage}
  \end{center}
  The first group can be describes as the free product $((\ZZ/2\ZZ * \ZZ/2\ZZ) \oplus \ZZ/2\ZZ) * \ZZ/2\ZZ$, while the second group is isomorphic with $(\ZZ/2\ZZ)^{\oplus 2} * (\ZZ/2\ZZ)^{\oplus 2}$.  They have the same K-theory by Theorem \ref{thm:k-theory-computation}.  In Example \ref{ex:identical-trace-pairing}, we investigate the reduced group \Cstar-algebras of these groups, showing that adding other information from the Elliott invariant does not help to recover the Coxeter type.
\end{example}

The next result transfers the K-theory calculations obtained for universal group \Cstar-algebra in Theorem \ref{thm:k-theory-computation} to the setting of Hecke \Cstar-algebras.  Recall Notation~\ref{not:clique-projections} for projections associated with cliques.
\begin{corollary}
  \label{cor:k-theory-hecke-algebras}
  Let $(W,S)$ be a right-angled Coxeter system and $\qm \in (0,1]^S$.
  \begin{itemize}
  \item The map $C \mapsto [p_C]$ induces an isomorphism $\ZZ^\cC \cong \rK_0(\Cstar_{\qm}(W))$.
  \item $\rK_1(\Cstar_{\qm}(W)) = 0$.
  \end{itemize}
\end{corollary}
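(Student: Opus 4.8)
The plan is to deduce this directly from the KK-equivalence $\lambda_\qm \colon \Cstar(W) \to \Cstar_\qm(W)$ established in Corollary \ref{thm:reduction-racg-kk-equivalence}, combined with the explicit K-theory computation for the universal group \Cstar-algebra in Theorem \ref{thm:k-theory-computation}. Since a KK-equivalence induces isomorphisms on all K-theory groups (via the Kasparov product with the invertible KK-class), the induced maps $(\lambda_\qm)_* \colon \rK_i(\Cstar(W)) \to \rK_i(\Cstar_\qm(W))$ are isomorphisms for $i \in \{0,1\}$. In particular, $\rK_1(\Cstar_\qm(W)) \cong \rK_1(\Cstar(W)) = 0$, which settles the second bullet point immediately.

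For the first bullet I would first check that, under the graph-product identifications $\Cstar(W) \cong \prod_\Gamma \Cstar(\ZZ/2\ZZ)$ and $\Cstar_\qm(W) \cong \prod_\Gamma (\Cstar(\ZZ/2\ZZ), \vphi_s)$ recalled in Section \ref{sec:coxeter-systems}, the $^*$-homomorphism $\lambda_\qm$ is precisely the reduction map between these two graph products. This is exactly the content of the commutative diagram appearing in the proof of Corollary \ref{thm:reduction-racg-kk-equivalence}. Crucially, both graph-product isomorphisms commute with the natural inclusions of the vertex algebras $\Cstar(\ZZ/2\ZZ)$, so the reduction map restricts to the identity on each copy of $\Cstar(\ZZ/2\ZZ)$. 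Consequently it sends each minimal projection $\chi_1^{(s)}$ to $\chi_1^{(s)}$, and hence sends the clique projection $p_C = \ot_{s \in C}\chi_1^{(s)}$ in $\Cstar(W)$ to the identically-defined projection $p_C$ in $\Cstar_\qm(W)$ of Notation \ref{not:clique-projections}.

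It then follows that $(\lambda_\qm)_*[p_C] = [p_C]$ for every clique $C \in \cC$, so the map $C \mapsto [p_C] \in \rK_0(\Cstar_\qm(W))$ factors as the isomorphism $\ZZ^\cC \cong \rK_0(\Cstar(W))$ of Theorem \ref{thm:k-theory-computation} followed by the isomorphism $(\lambda_\qm)_*$, and is therefore itself an isomorphism. The only point requiring genuine care — and the closest thing here to an obstacle — is the bookkeeping identification $\lambda_\qm(p_C) = p_C$ just described: one must confirm that the KK-equivalence is implemented by the honest $^*$-homomorphism $\lambda_\qm$ acting as the identity on the vertex copies of $\Cstar(\ZZ/2\ZZ)$, rather than by some abstract, possibly generator-scrambling, KK-class. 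Once this is in place, the result is immediate.
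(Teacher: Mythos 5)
Your proposal is correct and follows exactly the paper's own (one-line) proof: combine the KK-equivalence $\lambda_\qm\colon \Cstar(W)\to\Cstar_\qm(W)$ of Corollary~\ref{thm:reduction-racg-kk-equivalence} with the computation of Theorem~\ref{thm:k-theory-computation}. The extra care you take in checking that the KK-equivalence is implemented by the honest $^*$-homomorphism $\lambda_\qm$, which carries each clique projection $p_C\in\Cstar(W)$ to the corresponding $p_C\in\Cstar_\qm(W)$, is precisely the bookkeeping the paper leaves implicit, and it is the right point to verify.
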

\begin{proof}
  This follows from Theorem \ref{thm:reduction-racg-kk-equivalence} and Theorem \ref{thm:k-theory-computation}.
\end{proof}

Now that we have found an explicit basis of $\rK_0(\Cstar_{\qm}(W))$, we can calculate the trace pairing with the natural trace.
\begin{proposition}
  \label{prop:trace-pairing}
  Let $(W, S)$ be a right-angled Coxeter system and $\qm \in (0,1]^S$.  Denote by $\cC$ the set of cliques of the commutation graph of $(W,S)$.  The trace pairing of $\rK_0(\Cstar_{\qm}(W))$ with the natural trace is given by
  \begin{gather*}
    \tau_*([p_C]) = \prod_{s \in C} \frac{1 }{1+q_s}
    \eqstop
  \end{gather*}
\end{proposition}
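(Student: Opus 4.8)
The plan is to observe that the class $[p_C]$ is represented by an honest projection inside $\Cstar_\qm(W)$, so that the trace pairing reduces to evaluating $\tau$ on that projection, and then to compute this value using the graph product structure.

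First I would recall that the isomorphism $\ZZ^\cC \cong \rK_0(\Cstar_\qm(W))$ of Corollary~\ref{cor:k-theory-hecke-algebras} is implemented by the KK-equivalence $\lambda_\qm\colon \Cstar(W) \to \Cstar_\qm(W)$ of Corollary~\ref{thm:reduction-racg-kk-equivalence}. As $\lambda_\qm$ is a genuine $*$-homomorphism, the induced map on $\rK_0$ sends $[p_C]$ to $[\lambda_\qm(p_C)]$, and under the trace-preserving identification $\Cstar_\qm(W) \cong \prod_\Gamma (\Cstar(\ZZ/2\ZZ), \vphi_s)$ from Section~\ref{sec:coxeter-systems} the element $\lambda_\qm(p_C)$ is exactly the projection $\ot_{s\in C}\chi_1^{(s)}$. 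Since $p_C$ is an honest projection and not merely a class in a matrix algebra, the trace pairing is just its trace, so it remains to compute $\tau(\ot_{s\in C}\chi_1^{(s)})$ with respect to the graph product state corresponding to $\tau$.

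The crucial structural input is that $C$ is a clique, that is a complete subgraph of $\Gamma$. Therefore the vertices of $C$ are pairwise adjacent, the corresponding copies of $\Cstar(\ZZ/2\ZZ)$ pairwise commute, and they generate their tensor product $\bigotimes_{s\in C}\Cstar(\ZZ/2\ZZ)$ inside the graph product, on which the graph product state restricts to the product state $\bigotimes_{s\in C}\vphi_s$. I would justify this factorization by a centering argument: writing $\chi_1^{(s)} = \vphi_s(\chi_1)\mathbf 1 + (\chi_1^{(s)} - \vphi_s(\chi_1)\mathbf 1)$ and expanding the product over $s \in C$, any term containing at least one centered factor is a product of centered elements coming from vertices of a single clique, hence an elementary tensor whose vacuum expectation vanishes; only the fully scalar term survives.

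Granting this, the computation concludes at once. Using the value $\vphi_s(\chi_1) = \tfrac{1}{1+q_s}$ recorded in Section~\ref{sec:coxeter-systems}, we obtain
\[
  \tau_*([p_C]) = \Big(\bigotimes_{s\in C}\vphi_s\Big)\Big(\ot_{s\in C}\chi_1^{(s)}\Big) = \prod_{s\in C}\vphi_s(\chi_1) = \prod_{s\in C}\frac{1}{1+q_s}\eqstop
\]
For the empty clique one has $p_\emptyset = 1$, so both sides equal $1$ and the formula holds in that degenerate case too. I expect the only genuine subtlety to be the factorization of the graph product state over a clique; everything else is bookkeeping with the identifications already established.
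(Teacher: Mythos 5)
Your proof is correct and takes essentially the same route as the paper's: the paper's one-line argument likewise reduces the pairing to the explicit tensor-product form of $p_C$ over the clique together with the value $\vphi_s(\chi_1)=\frac{1}{1+q_s}$ recorded at the end of Section~\ref{sec:coxeter-systems}. Your centering argument simply makes explicit the factorization of the graph-product state over a clique, which the paper treats as immediate from the construction.
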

\begin{proof}
  This follows directly from the explicit description of the projection $p_C$ as a tensor product, and the computation in the last part of Section \ref{sec:coxeter-systems}.
\end{proof}

\begin{example}
  \label{ex:trace-pairing-isomorphic}
  Let $W_1, W_2$ be right-angled Coxeter groups with commutation graphs $\Gamma_1$ and $\Gamma_2$, respectively.  Then Proposition \ref{prop:trace-pairing} shows that $(\rK_0(\Cstar_r(W_1)), (\tau_1)_*) \cong (\rK_0(\Cstar_r(W_2)), (\tau_2)_*)$ if for every $n \in \NN$ the number of $n$-cliques in $\Gamma_1$ and $\Gamma_2$ agree.
\end{example}

\begin{example}
  \label{ex:identical-trace-pairing}
  Having pointed out two right-angled Coxeter groups with isomorphic K-groups in Example \ref{ex:identical-k-theory}, one could speculate whether adding other information from the Elliott invariant helps to remedy the situation.  That this is not the case, can be shown thanks to Proposition \ref{prop:trace-pairing}.  Considering the reduced group \Cstar-algebras of $W_1 = ((\ZZ/2\ZZ * \ZZ/2\ZZ) \oplus \ZZ/2\ZZ) * \ZZ/2\ZZ$ and $W_2 = (\ZZ/2\ZZ)^{\oplus 2} * (\ZZ/2\ZZ)^{\oplus 2}$, we find identifications of $(\rK_0(\Cstar_r(W_i), \tau_*)$ with $(\ZZ^6, (1, 1/2 , 1/2, 1/2, 1/2, 1/4, 1/4))$.   Since the reduced group \Cstar-algebras of $W_1$ and $W_2$ fall in the scope of the work of Dykema's result \cite[Theorem 1]{dykema99}, they have a unique trace.  Further, Dykema and R{\o}rdam's work \cite[Theorem 2]{dykemarordam1998}) shows that the order in $\rK_0$ is determined by this trace.  So the Elliott invariants of $\Cstarred(W_1)$ and $\Cstarred(W_2)$ are isomorphic, whereas the groups themselves are not isomorphic, because their commutation graphs are non-isomorphic.
\end{example}

Specialising Proposition \ref{prop:trace-pairing} to the case $\qm = {\bf 1}$, we obtain the following result which recovers the size of a maximal clique in the commutation graph.
\begin{corollary}
  \label{cor:trace-image-reduced}
  Let $(W,S)$ be a right-angled Coxeter group and $c$ the size of a maximal clique in the commutation graph of $(W,S)$.  Then $\tau_*(\rK_0(\Cstar_r(W))) = \frac{1}{2^c}\ZZ$.
\end{corollary}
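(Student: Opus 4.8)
The plan is to specialise Proposition~\ref{prop:trace-pairing} to the undeformed parameter $\qm = \mathbf{1}$ and then to identify the resulting subgroup of $\QQ$ generated by the images of the basis elements. First I would recall that $\Cstar_r(W) = \Cstar_{\mathbf{1}}(W)$, so that Corollary~\ref{cor:k-theory-hecke-algebras} exhibits $\rK_0(\Cstar_r(W))$ as the free abelian group on the classes $[p_C]$, $C \in \cC$. Substituting $q_s = 1$ for every $s \in S$ into the formula of Proposition~\ref{prop:trace-pairing} gives
\begin{gather*}
  \tau_*([p_C]) = \prod_{s \in C} \frac{1}{1+1} = \frac{1}{2^{|C|}}
  \eqstop
\end{gather*}
Since the classes $[p_C]$ form a $\ZZ$-basis of $\rK_0(\Cstar_r(W))$, the image $\tau_*(\rK_0(\Cstar_r(W)))$ is exactly the subgroup of $\QQ$ generated by $\{2^{-|C|} : C \in \cC\}$.

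Next I would observe that the clique sizes $|C|$ range over all of $\{0, 1, \ldots, c\}$. Indeed, the empty clique has size $0$, a maximal clique $C_0$ has size $c$ by the definition of $c$, and every subset of $C_0$ is again a clique, producing cliques of every intermediate size. Hence the generating set $\{2^{-|C|} : C \in \cC\}$ contains exactly the values $1, \tfrac{1}{2}, \ldots, \tfrac{1}{2^c}$, with no generator whose denominator exceeds $2^c$.

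Finally I would identify the subgroup. The element $2^{-c}$ generates $\frac{1}{2^c}\ZZ$, and every other generator $2^{-k}$ with $0 \le k \le c$ equals $2^{c-k} \cdot 2^{-c} \in \frac{1}{2^c}\ZZ$; conversely all generators lie in $\frac{1}{2^c}\ZZ$, so the subgroup they generate is precisely $\frac{1}{2^c}\ZZ$, as claimed. I do not anticipate any real obstacle here: the only point requiring a one-line justification is that cliques of every size up to $c$ occur, which is immediate from the hereditary nature of cliques under passing to subsets, after which the conclusion is an elementary fact about finitely generated subgroups of $\QQ$.
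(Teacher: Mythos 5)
Your proposal is correct and takes essentially the same route as the paper, which obtains the corollary precisely by specialising Proposition~\ref{prop:trace-pairing} to $\qm = \mathbf{1}$ and using the basis $\{[p_C]\}_{C \in \cC}$ of $\rK_0(\Cstar_r(W))$ from Corollary~\ref{cor:k-theory-hecke-algebras}. Your write-up merely makes explicit the two small points the paper leaves implicit: that cliques of every size $0 \le k \le c$ occur (subsets of cliques are cliques), and that the subgroup of $\QQ$ generated by $1, \tfrac{1}{2}, \dotsc, \tfrac{1}{2^c}$ is exactly $\frac{1}{2^c}\ZZ$.
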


\section{Classification of Hecke \Cstar-algebras by $\rK$-theory}
\label{sec:classification}

In this section we will focus on the case $\ZZ/2\ZZ^{*n}$ and understand by means of this group to which extent K-theoretic invariants can be used to classify Hecke \Cstar-algebras.  The next example collects the necessary K-theoretic calculations.
\begin{example}
  \label{ex:k-elliott-invariant-free-product}
  It follows from Dykema's work on simplicity of free products of finite dimensional abelian \Cstar-algebras and on traces on such free products \cite[Theorem 4.8]{dykema99} that for the single parameter case $\qm = (q, \dotsc, q) \in (0,1]^S$, the Hecke \Cstar-algebras $\Cstar_q(\ZZ/2\ZZ)$ is
  \begin{itemize}
  \item simple with a unique trace if and only if $q > \frac{1}{n-1}$,
  \item has a character whose kernel is a simple, non-unital \Cstar-algebra with a unique trace if $q = \frac{1}{n-1}$, and
  \item a direct sum of a one dimensional \Cstar-algebra with a simple \Cstar-algebra having a unique trace if $q < \frac{1}{n-1}$.
  \end{itemize}
  Let us consider the first and last case and calculate the (unordered) Elliott invariant of $\Cstar_q(\ZZ/2\ZZ^{*n})$.   Applying Corollary \ref{cor:k-theory-hecke-algebras}, we find that a basis of $\rK_0(\Cstar_q(\ZZ/2\ZZ^{*n})$ is given by $[1], [p_1], \dotsc, [p_n]$, where $p_i \in \Cstar_q(\ZZ/2\ZZ^{*n})$ denotes the spectral projection of the i-th copy of $\ZZ/2\ZZ$.  By Proposition \ref{prop:trace-pairing}, the trace pairing with the natural trace is then given under this identification by the row vector $(1 , \frac{1}{1 + q} , \dotsc , \frac{1}{1 + q})$.  If $q > \frac{1}{n-1}$, this describes the complete unordered Elliott invariant.  We thus assume from now on $q < \frac{1}{n-1}$ and will explicitly identify the two extremal traces on $\Cstar_q(\ZZ/2\ZZ^{*n})$ and their pairing with $\rK$-theory.

  It follows from Dykema's \cite[Theorem 4.8]{dykema99} that the one dimensional direct summand of $\Cstar_q(\ZZ/2\ZZ^{*n})$ is generated by the projection $p = \bigwedge_{1 \leq i \leq n} p_i$.  One extremal trace on $\Cstar_q(\ZZ/2\ZZ^{*n})$ is thus characterised by the formula
  \begin{gather*}
    \epsilon(x) p =  x p
    \eqstop
  \end{gather*}
  Under the identification with $\ZZ^{n+1}$ its pairing with $\rK$-theory is hence given by the constant row vector $(1 , \dotsc , 1)$.

  We will next identify the second extremal trace $\vphi$ on $\Cstar_q(\ZZ/2\ZZ^{*n})$.  It satisfies $\vphi(p) = 0$, so that the ansatz $\tau = t \epsilon + (1 - t)\vphi$ yields $t = \tau(p)$ and
  \begin{gather*}
    \vphi = \frac{\tau - t \epsilon}{1 - t}
    \eqstop
  \end{gather*}
  The calculation of $\tau(p)$ follows from the explicit identification of Hecke eigenvectors made for example in \cite[Proof of Theorem 5.3]{garncarek16} and \cite[Proposition 2.2]{raumskalski20}.  The vector
  \begin{gather*}
    \eta = \sum_{w \in \ZZ/2\ZZ^{*n}} q^{\frac{|w|}{2}} \delta_w \in \ltwo(\ZZ/2\ZZ^{*n})
  \end{gather*}
  lies in the image of $p$ and we have
  \begin{gather*}
    p \delta_e
    = \langle \delta_e, \frac{\eta}{\|\eta\|} \rangle  \frac{\eta}{\| \eta \|}
    = \frac{\eta_e}{\|\eta\|^2} \eta
    \eqstop
  \end{gather*}
  Hence
  \begin{gather*}
   t= \tau(p) = \frac{\eta_e^2}{\|\eta\|^2} = \left ( \sum_{w \in \ZZ/2\ZZ^{*n}} q^{|w|} \right )^{-1} = \frac{1 - (n-1)q}{1 + q}
    \eqstop
  \end{gather*}
  Here we used the calcuation of the growth series for $\ZZ/2\ZZ^{*n}$, which is based on the fact that the number $s_k$ of elements of length $k \in \NN$ is
  \begin{gather*}
    s_k =
    \begin{cases}
      1 & k = 0\ \\
      n (n-1)^{k-1} & k \neq 0
      \eqstop      
    \end{cases}
  \end{gather*}
  Indeed, it follows that
  \begin{gather*}
    \| \eta \|^2
    =
    \sum_{k \in \NN} s_k q^k
    =
    1 + n \sum_{k \in \NN} (n - 1)^k q^{k+1}
    =
    1 + \frac{nq}{1 - (n - 1)q}
    =
    \frac{1 + q}{1 - (n-1)q}
    \eqstop
  \end{gather*}
  For the spectral projection $p_i$ associated to the Coxeter generator, we find that
  \begin{align*}
    \vphi(p_i)
    & =
      \frac{\tau(p_i) - t \epsilon(p_i)}{1 - t} \\
    & =
      \frac{\frac{1}{1 + q} - \frac{1 - (n-1)q}{1 + q}}{1 - \frac{1 - (n-1)q}{1 + q}} \\
    & =
      \frac{(n-1)q}{1 + q} \cdot \frac{1 + q}{1 + q - 1 + (n-1)q} \\
    & =
      \frac{(n-1)q}{1 - (n-1)q} \\
    & =
      \frac{n-1}{n}
      \eqstop
  \end{align*}
  It follows that the K-theory pairing of $\vphi$ under the identification $\rK_0(\Cstar_q(\ZZ/2\ZZ)^{*n})$ is given by the row vector
  \begin{gather*}
    \left ( 1, \frac{n-1}{n}, \dotsc , \frac{n-1}{n} \right )
    \eqstop
  \end{gather*}
\end{example}

In order to exploit the explicit calculations made in Example \ref{ex:k-elliott-invariant-free-product}, we need two auxiliary lemmas.  The following elementary lemma is well-known.
\begin{lemma}\label{lem:Z+Zmu}
  Let $x,y \in [\frac{1}{2},1]$. Then  $\ZZ + x \ZZ  = \ZZ + y \ZZ$ if and only if either
  \begin{enumerate}
  \item $x = y$, or
  \item $x, y$ are rational and have the same order in $\QQ/\ZZ$.
  \end{enumerate}	
\end{lemma}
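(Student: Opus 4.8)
The plan is to describe the subgroup $G_x := \ZZ + x\ZZ \subseteq \RR$ explicitly in terms of the arithmetic nature of $x$, and then simply read off when $G_x = G_y$. The fundamental dichotomy is between $x$ rational and $x$ irrational, so the first step is to record a normal form in each case. If $x = a/b$ in lowest terms (with $b \geq 1$), then a B\'ezout relation $ma + nb = 1$ shows $\tfrac1b \in G_x$, whence $G_x = \tfrac1b\ZZ$; moreover the order of $x$ in $\QQ/\ZZ$ is exactly this denominator $b$. If $x$ is irrational, then $1$ and $x$ are linearly independent over $\QQ$, so $G_x$ is free abelian of rank $2$ with basis $\{1,x\}$, and in particular it is dense in $\RR$ rather than cyclic. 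These two normal forms already carry most of the content.

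For the ``if'' direction: if $x = y$ there is nothing to prove, and if $x,y$ are rational with the same order in $\QQ/\ZZ$, then by the first step the two denominators coincide, so $G_x = \tfrac1b\ZZ = G_y$.

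For the ``only if'' direction I would argue as follows, assuming $G_x = G_y$. First separate the rational and irrational cases by a topological observation: a rational argument produces the discrete group $\tfrac1b\ZZ$, whereas an irrational argument produces a dense (non-cyclic) group, so equality forces $x$ and $y$ to be simultaneously rational or simultaneously irrational. In the rational case $\tfrac{1}{b_x}\ZZ = \tfrac{1}{b_y}\ZZ$ gives $b_x = b_y$ at once (the reciprocal of the smallest positive element recovers the denominator), which is alternative (ii). The substantive case is when both are irrational: here I use that $y \in G_x$ and $x \in G_y$ to write $y = a + bx$ and $x = c + dy$ with $a,b,c,d \in \ZZ$; substituting one into the other and comparing coefficients against the $\QQ$-basis $\{1,x\}$ yields $bd = 1$, hence $b = \pm 1$.

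The main obstacle, and the only place where the hypothesis $x,y \in [\tfrac12,1]$ is actually used, is the final interval bookkeeping. With $b = 1$ we get $y - x = a \in \ZZ \cap [-\tfrac12,\tfrac12]$, forcing $a = 0$ and $x = y$. With $b = -1$ we get $x + y = -a \in \ZZ \cap [1,2]$, so $x + y \in \{1,2\}$; but $x + y = 1$ forces $x = y = \tfrac12$ and $x + y = 2$ forces $x = y = 1$, both rational, contradicting irrationality. Hence $x = y$, which is alternative (i). I expect this case analysis to be the only delicate point; it is entirely elementary, consistent with the lemma being well known, and the role of the window $[\tfrac12,1]$ is precisely to pin down these boundary integers so that the off-diagonal solution $y = \pm x + a$ collapses to $x = y$.
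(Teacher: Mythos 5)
Your proof is correct and follows essentially the same route as the paper's: identify $\ZZ + x\ZZ$ with $\tfrac1b\ZZ$ in the rational case, and in the irrational case write $y = a + bx$, $x = c + dy$, substitute to force $b = \pm 1$, then use the window $[\tfrac12,1]$ to pin down the integer and conclude $x = y$. If anything, your write-up is slightly more explicit than the paper's (which compresses the $b = -1$ case into ``substituting $l = \pm 1$ implies $x = y$''), and the lone sign slip in the $b = -1$ branch ($x + y = a$, not $-a$) is harmless since either way it is an integer in $[1,2]$.
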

\begin{proof}
  If $\ZZ + x \ZZ  = \ZZ +  y \ZZ$ holds, then there exist integers $k, l, m, n \in \ZZ$ such that $x = k + ly$ and $y = m + nx$.  Hence $x = k + l(m + nx)$, so that $x(1-ln) \in \ZZ$ holds.  On the one hand, if $x$ is irrational, we must have $ln = 1$.  Since $x, y \in [\frac{1}{2}, 1]$, substituting $l = \pm 1$ in the equality $x = k + l y$ implies that $x = y$.  On the other hand, if $x$ is rational and $d \in \NN$ is the smallest non-zero positive integer such that $d x \in \NN$, then we have $\frac{1}{d} \in \ZZ + x \ZZ $, and obviously also $x \in \ZZ + \frac{1}{d}\ZZ$.  So $\ZZ + \frac{1}{d}\ZZ = \ZZ + x \ZZ$.  It then remains to note that if $c,d \in \NN_{\geq 1}$ then $\ZZ + \frac{1}{c} \ZZ = \ZZ + \frac{1}{d} \ZZ$ implies $c = d$.
\end{proof}

The next result is a consequence of the lemma above and the classification of orbits of the $\mathrm{GL}_n(\Z)$ action on $\RR^n$, as studied for example in \cite{cabrermundici17}. 
\begin{lemma}
  \label{lem:affine-orbits}
  Suppose that $x,y \in \RR$, let $n \in \NN_{\geq 3}$ and write $\mathbf{x} = (x, \dotsc, x) \in \RR^n$, $\mathbf{y} = (y, \dotsc, y) \in \RR^n$. Then the following conditions are equivalent:
  \begin{enumerate}
  \item \label{it:affine-transformation}
    there exist $B \in \mathrm{GL}_{n}(\ZZ)$ and  $C \in \ZZ^n$ such that 
    $\mathbf{x} = C + B \mathbf{y}$; 
  \item \label{it:equal-or-same-order}
    either $x=y$ or both $x$ and $y$ are rational and have the same order in $\QQ/\ZZ$.
  \end{enumerate}
\end{lemma}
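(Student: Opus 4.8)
The plan is to decouple the two conditions through the auxiliary statement $\ZZ + x\ZZ = \ZZ + y\ZZ$: I will show that (i) holds if and only if these two subgroups of $\RR$ coincide, after which Lemma~\ref{lem:Z+Zmu} immediately identifies the subgroup equality with (ii). Note that the hypothesis $n \geq 3$ enters only through $n \geq 2$.

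The easy direction is (i) $\Rightarrow$ $\ZZ + x\ZZ = \ZZ + y\ZZ$. Writing $\mathbf{y} = y\mathbf{1}$ and reading off a single coordinate of $\mathbf{x} = C + B\mathbf{y}$ gives $x = C_1 + (\sum_j B_{1j})\,y \in \ZZ + y\ZZ$, so $\ZZ + x\ZZ \subseteq \ZZ + y\ZZ$. Feeding the inverse relation $\mathbf{y} = B^{-1}\mathbf{x} - B^{-1}C$ (legitimate since $B^{-1} \in \GL_n(\ZZ)$ and $B^{-1}C \in \ZZ^n$) through the same computation gives $y \in \ZZ + x\ZZ$ and hence the reverse inclusion. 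Together these force the two subgroups to agree.

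The substantive direction is the converse, and this is where I expect the real work. Assuming $\ZZ + x\ZZ = \ZZ + y\ZZ$, I would consider the set $T = \{b \in \ZZ : x - yb \in \ZZ\}$ of admissible multipliers; it is nonempty (as $x \in \ZZ + y\ZZ$) and is a coset of the period subgroup $T_0 = \{b \in \ZZ : yb \in \ZZ\}$. The goal is to produce a primitive vector $\mathbf{b} \in \ZZ^n$ all of whose coordinates lie in $T$: granting this, the orbit classification of the $\GL_n(\ZZ)$-action on $\RR^n$ (transitivity on primitive integer vectors, valid for $n \geq 2$) yields $B \in \GL_n(\ZZ)$ with $B\mathbf{1} = \mathbf{b}$, and then $C := \mathbf{x} - y\mathbf{b}$, which lies in $\ZZ^n$ by the choice of $\mathbf{b}$, realises $\mathbf{x} = C + B\mathbf{y}$, i.e.\ (i). To build such a $\mathbf{b}$ I would split on whether $y$ is irrational — where $T_0 = \{0\}$, and the mutual relations $x = k+my$, $y = k'+m'x$ coming from the subgroup equality give $mm' = 1$, forcing the unique admissible multiplier to be $\pm 1$, so $\mathbf{b} = \pm\mathbf{1}$ — or rational with denominator $d$, in which case the equality $\ZZ + x\ZZ = \ZZ + y\ZZ = \tfrac{1}{d}\ZZ$ forces $x$ to have denominator exactly $d$, making the coset representative $b_0$ of $T$ coprime to $d$, so that a vector such as $(b_0, b_0 + d, b_0, \dots, b_0)$ is simultaneously admissible and primitive (here $n \geq 2$ is essential to have room for two distinct coordinates).

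The main obstacle is precisely this construction of a primitive admissible vector: it is where the full equality $\ZZ + x\ZZ = \ZZ + y\ZZ$, rather than a one-sided inclusion, pays off, guaranteeing coprimality of the coset representative to the period. Once the equivalence (i) $\Leftrightarrow$ $(\ZZ + x\ZZ = \ZZ + y\ZZ)$ is in hand, the passage to (ii) is immediate from Lemma~\ref{lem:Z+Zmu}, whose right-hand side is verbatim condition (ii).
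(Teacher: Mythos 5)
Your proposal is correct, and it shares the paper's overall architecture: both proofs decouple the statement through the subgroup equality $\ZZ + x\ZZ = \ZZ + y\ZZ$ and then conclude with Lemma~\ref{lem:Z+Zmu}. The difference lies in how the equivalence of this equality with condition~\ref{it:affine-transformation} is established. The paper deduces it from the classification of orbits of the affine $\GL_n(\ZZ)$-action on $\RR^n$ due to Cabrer and Mundici, citing their Corollary~16 after noting that $\ZZ + x\ZZ$ has rank one or two according to whether $x$ is rational; it is exactly this citation that consumes the hypothesis $n \geq 3$. You instead prove the equivalence directly: the forward implication by reading off one coordinate of $\mathbf{x} = C + B\mathbf{y}$ and of the inverted relation $\mathbf{y} = B^{-1}\mathbf{x} - B^{-1}C$, and the converse by exhibiting a primitive vector $\mathbf{b} \in \ZZ^n$ whose coordinates all lie in the admissible coset $T = \{b \in \ZZ : x - yb \in \ZZ\}$, then using transitivity of $\GL_n(\ZZ)$ on primitive integer vectors to solve $B\mathbf{1} = \mathbf{b}$ and setting $C = \mathbf{x} - y\mathbf{b} \in \ZZ^n$. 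Your two constructions are sound: when $y$ is irrational the mutual relations force $T$ to be $\{1\}$ or $\{-1\}$, and when $y = a/d$ in lowest terms the equality of subgroups (not merely one inclusion) forces $x$ to have reduced denominator exactly $d$, whence $\gcd(b_0,d) = 1$ and $(b_0, b_0 + d, b_0, \dotsc, b_0)$ is primitive. Your route buys self-containedness and a genuinely weaker hypothesis --- $n \geq 2$ suffices, and this is sharp, since for $n = 1$ the equivalence fails (take $x = \tfrac{1}{5}$, $y = \tfrac{2}{5}$) --- while the paper's route buys brevity and situates the lemma inside a general orbit classification. One caveat your proof shares with the paper's: Lemma~\ref{lem:Z+Zmu} is stated only for $x, y \in [\tfrac{1}{2}, 1]$, whereas Lemma~\ref{lem:affine-orbits} is stated for all $x, y \in \RR$, and some restriction is genuinely needed in this last step (for $x = \sqrt{2}$, $y = 1 + \sqrt{2}$, condition~\ref{it:affine-transformation} holds with $B$ the identity and $C = -\mathbf{1}$, yet condition~\ref{it:equal-or-same-order} fails); this is harmless in the intended application, where $x, y = \tfrac{1}{1+q_i} \in [\tfrac{1}{2}, 1)$.
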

\begin{proof}
  Following \cite{cabrermundici17} we define additive subgroups of $\RR$ by $G_x = \mathbb{Z}+ x\mathbb{Z}$ and $G_y = \mathbb{Z}+ y\mathbb{Z}$.  Note that the rank of $G_x$ is one, if $x \in \QQ$ and two if $x \notin \QQ$.  Since $n \geq 3$, we may apply \cite[Corollary 16]{cabrermundici17} and infer that \ref{it:affine-transformation} holds if and only if $G_x = G_y$.  The claim follows then from Lemma \ref{lem:Z+Zmu}.
\end{proof}

The next theorem demonstrates the extend to which K-theoretic invariants are able to classify Hecke \Cstar-algebras.  We comment on the use of the unordered Elliott invariant in contrast to the full Elliott invariant in Remark \ref{rem:unordered-elliott-invariant}. 
\begin{theorem}
  \label{thm:classification}
  Suppose $n \in \NN_{\geq 3}$ and let $q_1,q_2 \in (0,1]$.  Then the following statements hold true.
  \begin{itemize}
  \item If $\Cstar_{q_1}(\ZZ/2\ZZ^{*n}) \cong \Cstar_{q_2}(\ZZ/2\ZZ^{*n})$ and one of $q_1$ or $q_2$ lies in either of the sets $(0,\frac{1}{n-1})$, $\{\frac{1}{n-1}\}$, $(\frac{1}{n-1}\, 1]$, then so does the other.
  \item Assume $\Cstar_{q_1}(\ZZ/2\ZZ^{*n}) \cong \Cstar_{q_2}(\ZZ/2\ZZ^{*n})$ and $q_1, q_2 \in (\frac{1}{n-1}, 1]$.
    \begin{itemize}
    \item If $q_1$ or $q_2$ is irrational, then $q_1 = q_2$.
    \item If $q_1$ or $q_2$ is rational, then so is the other. Further, $\frac{1}{1 + q_1}$ and $\frac{1}{1 + q_2}$ have the same order in $\QQ/\ZZ$.
    \item If $q_1, q_2$ are rational and $\frac{1}{1 + q_1}$, $\frac{1}{1 + q_2}$ have the same order in $\QQ/\ZZ$, then the unordered Elliott invariants of $\Cstar_{q_1}(\ZZ/2\ZZ^{*n})$ and $\Cstar_{q_2}(\ZZ/2\ZZ^{*n})$ are isomorphic.
    \end{itemize}
  \item If $q_1, q_2 \in (0,\frac{1}{n-1})$, then the unordered Elliott invariants of $\Cstar_{q_1}(\ZZ/2\ZZ^{*n})$ and $\Cstar_{q_2}(\ZZ/2\ZZ^{*n})$ are isomorphic.
  \end{itemize}
\end{theorem}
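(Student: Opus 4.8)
The plan is to handle the three regimes $q \in (0,\tfrac{1}{n-1})$, $q = \tfrac{1}{n-1}$, and $q \in (\tfrac{1}{n-1},1]$ separately, reading off the structure of $\Cstar_q(\ZZ/2\ZZ^{*n})$ from Example \ref{ex:k-elliott-invariant-free-product} and the trace pairing from Proposition \ref{prop:trace-pairing}. For the first statement I would exhibit two manifestly $^*$-isomorphism invariant properties that separate the regimes. By Example \ref{ex:k-elliott-invariant-free-product} the algebra is simple precisely when $q > \tfrac{1}{n-1}$, which isolates the third regime. To split the first two regimes I would use the existence of a nontrivial central projection: when $q < \tfrac{1}{n-1}$ one has $\Cstar_q(\ZZ/2\ZZ^{*n}) \cong \CC \oplus A_0$ with $A_0$ simple, hence a minimal central projection $p$ with $pAp \cong \CC$; when $q = \tfrac{1}{n-1}$ the character $\epsilon$ has simple, hence essential, kernel $I$, and a one-line argument (a central projection $z$ gives $zI \in \{0,I\}$, forcing $z \in \{0,1\}$ by essentiality) shows there is no nontrivial central projection. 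Since simplicity and the presence of a $\CC$ direct summand are preserved under isomorphism, two isomorphic algebras lie in the same regime.

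For the forward implications of the second statement I would work in the simple regime, where each algebra carries a \emph{unique} trace $\tau$, so that the subgroup $\tau_*(\rK_0(\Cstar_{q_i}(\ZZ/2\ZZ^{*n}))) \subset \RR$ is an isomorphism invariant (any $^*$-isomorphism intertwines the unique traces, hence their pairings). By Corollary \ref{cor:k-theory-hecke-algebras} and Proposition \ref{prop:trace-pairing}, in the basis $[1],[p_1],\dots,[p_n]$ this image equals $\ZZ + \tfrac{1}{1+q_i}\ZZ$. As $q_i \in (0,1]$ gives $\tfrac{1}{1+q_i} \in [\tfrac12,1]$, an isomorphism forces $\ZZ + \tfrac{1}{1+q_1}\ZZ = \ZZ + \tfrac{1}{1+q_2}\ZZ$, and Lemma \ref{lem:Z+Zmu} applied to $x = \tfrac{1}{1+q_1}$, $y = \tfrac{1}{1+q_2}$ yields the two sub-bullets at once, using that $q$ is rational iff $\tfrac{1}{1+q}$ is rational.

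For the converse sub-bullet I would construct an explicit isomorphism of unordered Elliott invariants. In the above basis the invariant of $\Cstar_{q_i}(\ZZ/2\ZZ^{*n})$ is $(\ZZ^{n+1}, 0, e_0, \{\tau\}, \rho_i)$ with pairing row vector $\rho_i = (1, x_i, \dots, x_i)$, $x_i = \tfrac{1}{1+q_i}$. Writing $\mathbf{x}_i = (x_i,\dots,x_i) \in \RR^n$ and applying Lemma \ref{lem:affine-orbits} (this is where the hypothesis $n \geq 3$ enters) to the hypothesis that $x_1,x_2$ are rational of equal order, I obtain $\tilde B \in \GL_n(\ZZ)$ and $\mathbf{d} \in \ZZ^n$ with $\mathbf{x}_1 = \mathbf{d} + \tilde B\, \mathbf{x}_2$. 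Then the block matrix $B = \left(\begin{smallmatrix} 1 & \mathbf{d}^{\mathsf T} \\ 0 & \tilde B^{\mathsf T}\end{smallmatrix}\right) \in \GL_{n+1}(\ZZ)$ fixes the unit class, $B e_0 = e_0$, and a direct computation gives $\rho_2 B = (1, \mathbf{d}^{\mathsf T} + \mathbf{x}_2^{\mathsf T}\tilde B^{\mathsf T}) = (1, \mathbf{x}_1^{\mathsf T}) = \rho_1$, so $B$ is an isomorphism of the (one-point trace simplex) invariants.

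Finally, for the third statement I would simply note that the data computed in Example \ref{ex:k-elliott-invariant-free-product} for $q \in (0,\tfrac{1}{n-1})$ are independent of $q$: the trace simplex is always the segment with extreme points $\epsilon$ and $\vphi$, whose pairings with the basis are the $q$-free vectors $(1,\dots,1)$ and $(1,\tfrac{n-1}{n},\dots,\tfrac{n-1}{n})$. Hence the two unordered Elliott invariants coincide and the identity furnishes the required isomorphism. I expect the genuine difficulty to lie in the converse of the second statement, where the arithmetic condition ``equal order in $\QQ/\ZZ$'' must be upgraded to an actual $\GL_{n+1}(\ZZ)$-symmetry of the invariant; Lemma \ref{lem:affine-orbits} does precisely this work, and the rest is routine linear algebra and the structural bookkeeping of Example \ref{ex:k-elliott-invariant-free-product}.
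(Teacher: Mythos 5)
Your proposal is correct, and its skeleton is the same as the paper's: both rest on the invariant computations of Example \ref{ex:k-elliott-invariant-free-product} and Proposition \ref{prop:trace-pairing}, on Dykema's structural trichotomy for separating the three parameter regimes, and on Lemmas \ref{lem:Z+Zmu} and \ref{lem:affine-orbits} for the arithmetic. You diverge in two useful ways. First, where the paper disposes of the regime separation by simply citing Dykema, you spell out the isomorphism-invariant properties doing the work (simplicity; existence of a one-dimensional direct summand; absence of nontrivial central projections at $q = \frac{1}{n-1}$). One caveat here: your parenthetical ``simple, hence essential'' is not valid as stated, since a simple ideal need not be essential (a simple direct summand is not); what saves the argument is that the kernel at the critical parameter is \emph{non-unital}, and non-unitality together with the quotient being $\CC$ forces essentiality -- any ideal annihilating the kernel would embed into the quotient $\CC$, hence be a one-dimensional central summand, making the kernel unital, a contradiction. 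Second, for the forward implications of the middle bullet the paper invokes the full equivalence of Lemma \ref{lem:affine-orbits} (hence the $\GL_n(\ZZ)$-orbit classification of \cite{cabrermundici17}), whereas you observe that the subgroup $\tau_*\bigl(\rK_0(\Cstar_{q_i}(\ZZ/2\ZZ^{*n}))\bigr) = \ZZ + \frac{1}{1+q_i}\ZZ \subset \RR$ is already an isomorphism invariant by uniqueness of the trace, and feed it directly into Lemma \ref{lem:Z+Zmu}. This is more elementary and makes transparent that the hypothesis $n \geq 3$ enters only in the converse, where the order condition must be upgraded to an actual $\GL_{n+1}(\ZZ)$-symmetry of the invariant; your block-matrix construction there, and your treatment of the regime $q_1, q_2 \in (0, \frac{1}{n-1})$, coincide with the paper's.
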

\begin{proof}
  The division into the three different sets follows from Dykema's results established in \cite{dykema99}.  We first observe that if $q_1,q_2 \in (0,\frac{1}{n-1})$ then Example \ref{ex:k-elliott-invariant-free-product} shows that the unordered Elliott invariant of both Hecke \Cstar-algebras is up to isomorphism described by
  \begin{itemize}
  \item $\rK_0(\Cstar_{q_i}(\ZZ/2\ZZ^{*n}) \cong \ZZ^{n+1}$ with order unit $(1, 0, \dotsc, 0)^\rmt$,
  \item $\rK_1(\Cstar_{q_i}(\ZZ/2\ZZ^{*n}) \cong 0$,
  \item $\rT(\Cstar_{q_i}(\ZZ/2\ZZ^{*n})) \cong [0,1]$,
  \item the pairing of the trace $\epsilon$, corresponding to $0 \in [0,1]$, is given by the row vector $(1, \dotsc, 1)$, and
  \item the pairing of the trace $\vphi$, corresponding to $1 \in [0,1]$, is given by the row vector $(1, \frac{n-1}{n}, \dotsc, \frac{n-1}{n})$.
  \end{itemize}

  So it remains to investigate the case $q_1, q_2 \in (\frac{1}{n-1}, 1]$.  The unordered Elliott invariant of $\Cstar_{q_i}(\ZZ/2\ZZ^{*n})$ is calculated in Example \ref{ex:k-elliott-invariant-free-product}.
  \begin{itemize}
  \item $\rK_0(\Cstar_{q_i}(\ZZ/2\ZZ^{*n}) \cong \ZZ^{n+1}$ with order unit $(1, 0, \dotsc, 0)^\rmt$,
  \item $\rK_1(\Cstar_{q_i}(\ZZ/2\ZZ^{*n}) \cong 0$,
  \item $\rT(\Cstar_{q_i}(\ZZ/2\ZZ^{*n})) \cong \{0\}$, and
  \item the pairing of the unique trace $\tau$ is given by the row vector $(1 , \frac{1}{1+q_i}, \dotsc, \frac{1}{1+q_i})$.
  \end{itemize}

  Automorphisms of $\ZZ^{n+1}$ are given by left multiplication with $A \in \mathrm{GL}_{n+1}$.  Such automorphism preserves the unit of $\rK$-theory $(1,0, \dotsc, 0)^\rmt$ if and only if
  \begin{gather*}
    A =
    \begin{pmatrix}
      1 & C \\
      0 & B
    \end{pmatrix}
  \end{gather*}
  with $B \in \mathrm{GL}_n(\ZZ)$ and  $C \in \ZZ^{1 \times n}$.  Further, the pairing with the unique trace is preserved if and only if
  \begin{gather*}
    (1,y)
    =
    \left (1 , \frac{1}{1+q_2}, \dotsc, \frac{1}{1+q_2} \right )
    =
    \left (1 , \frac{1}{1+q_1}, \dotsc, \frac{1}{1+q_1} \right ) A
    =
    \left (1 , xB + C \right )
    \eqcomma
  \end{gather*}
  where $y = (\frac{1}{1 + q_2}, \dotsc,  \frac{1}{1 + q_2})$ and $x = (\frac{1}{1 + q_1}, \dotsc, \frac{1}{1 + q_1})$.  So Lemma \ref{lem:affine-orbits} applies and says that the unordered Elliott invariants of $\Cstar_{q_1}(\ZZ/2\ZZ^{*n})$ and $\Cstar_{q_2}(\ZZ/2\ZZ^{*n})$ are isomorphic if and only if one of the following two statements hold:  either $\frac{1}{1 + q_1}$ and $\frac{1}{1 + q_2}$ are irrational and equal to each other;  or $\frac{1}{1 + q_1}$ and $\frac{1}{1 + q_2}$ are rational and have the same order in $\QQ/\ZZ$.   The first case accounts exactly for $q_1$ and $q_2$ being irrational.  The second case accounts exactly for $q_1$ and $q_2$ being rational. 
\end{proof}

\begin{remark}
  \label{rem:unordered-elliott-invariant}
  In Theorem \ref{thm:classification}, we only make use of the unordered Elliott invariant.  In contrast, the classification programme for amenable \Cstar-algebras considers the order structure on the $\rK_0$-group as part of their invariant.  In practice in all situations where the order structure could be calculated, it turned out to be actually described by the trace pairing of the Elliott invariant.  Calculating the order structure of $\rK$-theory for free products, extending work of Dykema-R{\o}rdam \cite{dykemarordam1998} beyond the so called Avitzour condition, seems to be a challenging task that deserves separate attention.
\end{remark}

\begin{remark}
  \label{rem:distinguishing-tempered-representations}
  Despite the limits of classification results for Hecke \Cstar-algebras with rational parameters described by Theorem \ref{thm:classification}, one can make the following observation.  Given $d \in \NN_{\geq 3}$, the Hecke \Cstar-algebra $\Cstar_q(\ZZ/2\ZZ^{*n})$ with parameter $q = \frac{1}{d}$ does recognise $d$ as long as $d \leq n - 1$.  Indeed,
  \begin{gather*}
    \frac{1}{1 + q} = \frac{1}{\frac{1 + d}{d}} = \frac{d}{d+1}
  \end{gather*}
has order $d + 1$ in $\QQ/\ZZ$.  This observation applies to the Iwahori-Hecke \Cstar-algebra associated with groups acting strongly transitively on regular buildings of type $\ZZ/2\ZZ^{*n}$ and thickness $d \in \{2, \dots, n-1\}$.  See \cite[Section 2]{raumskalski20} for details on the relation between Hecke \Cstar-algebras and Iwahori-Hecke \Cstar-algebras.
\end{remark}


\bibliographystyle{plain}
\bibliography{k-theory-hecke-cstar-algebras-revised}

\begin{thebibliography}{10}

\bibitem{blackadar98}
Bruce Blackadar.
\newblock {\em K-theory for operator algebras}, volume~5 of {\em Mathematical
  Sciences Research Institute Publications}.
\newblock Cambridge University Press, Cambridge, second edition, 1998.

\bibitem{bocaradulescu92}
Florin Boca and Florin R{\u a}dulescu.
\newblock {Singularity of radial subalgebras in II$_1$ factors associated with
  free products of groups.}
\newblock {\em J. Funct. Anal.}, 103(1):138--159, 1992.

\bibitem{bozejkojanuszkiewiczspatzier88}
M.~Bo\.{z}ejko, T.~Januszkiewicz, and R.~J. Spatzier.
\newblock Infinite {C}oxeter groups do not have {K}azhdan's property.
\newblock {\em J. Operator Theory}, 19(1):63--67, 1988.

\bibitem{breuillardkalantarkennedyozawa14}
Emmanuel Breuillard, Mehrdad Kalantar, Matthew Kennedy, and Narutaka Ozawa.
\newblock {C$^*$-simplicity and the unique trace property for discrete groups.}
\newblock {\em Publ. Math. Inst. Hautes \'Etud. Sci.}, 126(1):35--71, 2017.

\bibitem{cabrermundici17}
Leonardo~Manuel Cabrer and Daniele Mundici.
\newblock Classifying orbits of the affine group over the integers.
\newblock {\em Ergodic Theory Dyn. Syst.}, 37(2):440--453, 2017.

\bibitem{capracelecureux11}
Pierre-Emmanuel Caprace and Jean L{\'e}cureux.
\newblock {Combinatorial and group-theoretic compactifications of buildings}.
\newblock {\em Ann. Inst. Fourier}, 61(2):619--672, 2011.

\bibitem{cgstw19}
Jos{\'e} Carri{\'o}n, James Gabe, Christopher Schafhauser, Aaron Tikuisis, and
  Stuart White.
\newblock {Classification of $^*$-homomorphims I: simple nuclear
  C$^*$-algebras.}
\newblock In preparation, 2021.

\bibitem{caspers20}
Martijn Caspers.
\newblock {Absence of Cartan subalgebras for right-angled Hecke von Neumann
  algebras.}
\newblock {\em Anal. PDE}, 13(1):1--28, 2020.

\bibitem{caspersfima17}
Martijn Caspers and Pierre Fima.
\newblock {Graph products of operator algebras.}
\newblock {\em J. Noncommut. Geom.}, 11(1):367--411, 2017.

\bibitem{caspersklisselarsen19}
Martijn Caspers, Mario Klisse, and Nadia Larsen.
\newblock {Graph product Khintchine inequalities and Hecke C$^*$-algebras:
  Haagerup inequalities.}, (non)simplicity, nuclearity and exactness.
\newblock {\em J. Funct. Anal.}, 280(108795), 2021.

\bibitem{capsersskalskiwasilewski19}
Martijn Caspers, Adam Skalski, and Mateusz Wasilewski.
\newblock {On masas in q-deformed von Neumann algebras.}
\newblock {\em Pacific J. Math.}, 302(1):1--21, 2019.

\bibitem{chifansinclair13}
Ionuts Chifan and Thomas Sinclair.
\newblock {On the structural theory of II$_1$ factors of negatively curved
  groups.}
\newblock {\em Ann. Sci. {\'E}c. Norm. Sup{\'e}r. (4)}, 46(1):1--34, 2013.

\bibitem{cuntz82-internal-structure}
Joachim Cuntz.
\newblock {The internal structure of simple C$^*$-algebras.}
\newblock In {\em Operator algebras and applications, Part I (Kingston, Ont.,
  1980)}, volume~38 of {\em Proceedings of Symposia in Pure Mathematics}, pages
  85--115. Providence, RI: American Mathematical Society, 1982.

\bibitem{cuntz82-free-products}
Joachim Cuntz.
\newblock {The K-groups for free products of C$^*$-algebras.}
\newblock In {\em Operator algebras and applications, Part I (Kingston, Ont.,
  1980)}, volume~38 of {\em Proceedings of Symposia in Pure Mathematics}, pages
  81--84. Providence, RI: American Mathematical Society, 1982.

\bibitem{cuntz83-k-amenability}
Joachim Cuntz.
\newblock {$K$-theoretic amenability for discrete groups.}
\newblock {\em J. Reine Angew. Math.}, 344:180--195, 1983.

\bibitem{davis08}
Michael~W. Davis.
\newblock {\em {The geometry and topology of Coxeter groups}}, volume~32 of
  {\em London Mathematical Society Monographs Series}.
\newblock Princeton, NJ: Princeton University Press, 2008.

\bibitem{davisdymarajanusykiewiczokun07}
Michael~W. Davis, Jan Dymara, Tadeusy Januszkiewicz, and Boris Okun.
\newblock {Weighted L$^2$-cohomology of Coxeter groups.}
\newblock {\em Geom. Topol.}, 11:47--138, 2007.

\bibitem{dykema93-hyperfinite}
Kenneth~J. Dykema.
\newblock {Free products of hyperfinite von Neumann algebras and free
  dimension.}
\newblock {\em Duke Math. J.}, 69(1):97--119, 1993.

\bibitem{dykema99}
Kenneth~J. Dykema.
\newblock {Simplicity and the stable rank of some free product C$^*$-algebras.}
\newblock {\em Trans. Am. Math. Soc.}, 351(1):1--40, 1999.

\bibitem{dykemarordam1998}
Kenneth~J. Dykema and Mikael R{\o}rdam.
\newblock {Projections in free product C$^*$-algebras.}
\newblock {\em Geom. Funct. Anal.}, 8(1):1--16, 1998.

\bibitem{eckhardtraum18}
Caleb Eckhardt and Sven Raum.
\newblock {C$^*$-superrigidity of 2-step nilpotent groups.}
\newblock {\em Adv. Math.}, 338:175--195, 2018.

\bibitem{eilersliruiz16}
S{\o}ren Eilers, Xin Li, and Efren Ruiz.
\newblock {The isomorphism problem for semigroup C$^*$-algebras of right-angled
  Artin monoids.}
\newblock {\em Doc. Math.}, 21:309--343, 2016.

\bibitem{eliaswilliamson14}
Ben Elias and Geordie Williamson.
\newblock {The Hodge theory of Soergel bimodules.}
\newblock {\em Ann. Math. (2)}, 180(3):1089--1136, 2014.

\bibitem{elliott76}
George~A. Elliott.
\newblock On the classification of inductive limits of sequences of semisimple
  finite-dimensional algebras.
\newblock {\em J. Algebra}, 38(1):29--44, 1976.

\bibitem{fimagermain15}
Pierre Fima and Emmanuel Germain.
\newblock {The $KK$-theory of amalgamated free products.}
\newblock {\em Adv. Math.}, 369(107174):1--35, 2020.

\bibitem{garncarek16}
{\L}ukasz Garncarek.
\newblock {Factoriality of Hecke–von Neumann algebras of right-angled Coxeter
  groups.}
\newblock {\em J. Funct. Anal.}, 270(3):1202--1219, 2016.

\bibitem{germain96}
Emmanuel Germain.
\newblock {$KK$}-theory of reduced free-product {$C^*$}-algebras.
\newblock {\em Duke Math. J.}, 82(3):707--723, 1996.

\bibitem{germain97}
Emmanuel Germain.
\newblock {{$KK$}-theory of the full free product of unital C$^*$-algebras}.
\newblock {\em J. Reine Angew. Math.}, 485:1--10, 1997.

\bibitem{gonglinniu15}
Guihua Gong, Huaxin Lin, and Zhuang Niu.
\newblock {Classification of finite simple amenable $\mathcal Z$-stable
  C$^*$-algebras.}
\newblock Preprint, 2015.

\bibitem{green-thesis}
Elisabeth~R. Green.
\newblock {\em {Graph products.}}
\newblock PhD thesis, University of Leeds,
  {http://ethesis.whiterose.ac.uk/236}, 1990.

\bibitem{higsonkasparov97}
Nigel Higson and Gennadi~G. Kasparov.
\newblock Operator {$K$}-theory for groups which act properly and isometrically
  on {H}ilbert space.
\newblock {\em Electron. Res. Announc. Amer. Math. Soc.}, 3:131--142, 1997.

\bibitem{humphreys90}
James~E. Humphreys.
\newblock {\em {Reflection groups and Coxeter groups}}, volume~29 of {\em
  Cambridge Studies in Advanced Mathematics}.
\newblock Cambridge: Cambridge University Press, 1990.

\bibitem{kalantarkennedy14-boundaries}
Mehrdad Kalantar and Matthew Kennedy.
\newblock {Boundaries of reduced $\mathrm{C}^*$-algebras of discrete groups.}
\newblock {\em J. Reine Angew. Math.}, 727:247--267, 2017.

\bibitem{kazhdanlusztig79}
David Kazhdan and George Lusztig.
\newblock {Representations of Coxeter groups and Hecke algebras.}
\newblock {\em Invent. Math.}, 53(2):165--184, 1979.

\bibitem{klisse2020}
Mario Klisse.
\newblock {Topological boundaries of connected graphs and Coxeter groups.}
\newblock To appear in J.Op.Theory., 2020.

\bibitem{klisse2021}
Mario Klisse.
\newblock {Simplicity of right-angled Hecke C$^*$-algebras.}
\newblock To appear in Int. Math. Res. Not., 2021.

\bibitem{lamthomas15}
Thomas Lam and Anne Thomas.
\newblock {Infinite reduced words and the Tits boundary of a Coxeter group.}
\newblock {\em Int. Math. Res. Not.}, 2015(17):7690--7733, 2015.

\bibitem{meyernest06}
Ralf Meyer and Richard Nest.
\newblock {The Baum-Connes conjecture via localisation of categories}.
\newblock {\em Topology}, 45:209--259, 2006.

\bibitem{neeman01}
Amnon Neeman.
\newblock {\em {Triangulated categories}}, volume 148 of {\em Annals of
  Mathematics Studies}.
\newblock Princeton, Oxford: Princeton University Press, 2001.

\bibitem{ozawa04-solid}
Narutaka Ozawa.
\newblock {Solid von Neumann algebras}.
\newblock {\em Acta Math.}, 192(1):111--117, 2004.

\bibitem{pimsnervoiculescu82}
Mihai Pimsner and Dan~V. Voiculescu.
\newblock {K-groups of reduced crossed products by free groups.}
\newblock {\em J. Oper. Theory}, 8(1):131--156, 1982.

\bibitem{powers75}
Robert~T. Powers.
\newblock {Simplicity of the C$^*$-algebra associated with the free group on
  two generators.}
\newblock {\em Duke Math. J.}, 42:151--156, 1975.

\bibitem{radcliffe03}
David~G. Radcliffe.
\newblock Rigidity of graph products of groups.
\newblock {\em Algebr. Geom. Topol.}, 3:1079--1088, 2003.

\bibitem{raum19-bourbaki}
Sven Raum.
\newblock {C$^*$-simplicity (after Breuillard, Haagerup, Kalantar, Kennedy and
  Ozawa).}
\newblock {\em Ast{\'e}risque}, 422, 2020.
\newblock S{\'e}minaire Bourbaki, expos{\'e} 1158.

\bibitem{raumskalski20}
Sven Raum and Adam Skalski.
\newblock {Factorial multiparameter Hecke von Neumann algebras and
  representations of groups acting on right-angled buildings}.
\newblock Preprint, 2020.

\bibitem{sanchezgarcia07}
Rub\'{e}n~J. S\'{a}nchez-Garc\'{\i}a.
\newblock Equivariant {$K$}-homology for some {C}oxeter groups.
\newblock {\em J. Lond. Math. Soc. (2)}, 75(3):773--790, 2007.

\bibitem{skandalis88}
Georges Skandalis.
\newblock Une notion de nucl\'{e}arit\'{e} en k-th\'{e}orie (d'apr\`es {J}.
  {C}untz).
\newblock {\em $K$-Theory}, 1(6):549--573, 1988.

\bibitem{solleveld12}
Maarten Solleveld.
\newblock On the classification of irreducible representations of affine
  {H}ecke algebras with unequal parameters.
\newblock {\em Represent. Theory}, 16:1--87, 2012.

\bibitem{solleveld18}
Maarten Solleveld.
\newblock {Topological K-theory of affine Hecke algebras.}
\newblock {\em Ann. K-theory}, 3(3):395--460, 2018.

\bibitem{tikuisiswhitewinter17}
Aaron Tikuisis, Stuart White, and Wilhelm Winter.
\newblock {Quasidiagonality of nuclear C$^*$-algebras.}
\newblock {\em Ann. of Math. (2)}, 185:229--284, 2017.

\bibitem{tu99}
Jean-Louis Tu.
\newblock La conjecture de {B}aum-{C}onnes pour les feuilletages moyennables.
\newblock {\em $K$-Theory}, 17(3):215--264, 1999.

\bibitem{voiculescu85}
Dan~V. Voiculescu.
\newblock {Symmetries of some reduced free product C$^*$-algebras.}
\newblock In {\em Operator algebras and their connections with topology and
  ergodic theory, (Buşteni/Rom. 1983)}, volume 1132 of {\em Lect. Notes
  Math.}, pages 556--588. Berlin Heidelberg New York: Springer, 1985.

\end{thebibliography}


\vspace{2em}
{\small
  \parbox[t]{200pt}
  {
    Sven Raum \\
    Department of Mathematics \\
    Stockholm University \\
    SE-106 91 Stockholm \\
    Sweden \\
    {\footnotesize raum@math.su.se}
  }
  \parbox[t]{200pt}
  {
    Adam Skalski \\
    Institute of Mathematics of the \\ Polish Academy of Sciences \\
    ul.\ \'Sniadeckich 8 \\
    00-656 Warszawa \\
    Poland \\
    {\footnotesize a.skalski@impan.pl}
  }
}

\end{document}